\def\l@subsection{\@tocline{2}{0pt}{2.5pc}{2.5pc}{}}%
\DeclareFontFamily{OT1}{pzc}{}
\DeclareFontShape{OT1}{pzc}{m}{it}{<-> s * [1.10] pzcmi7t}{}
\DeclareMathAlphabet{\mathpzc}{OT1}{pzc}{m}{it}
\theoremstyle{definition}
\newtheorem{theorem}{Theorem}[section]
\newtheorem{definition}[theorem]{Definition}
\newtheorem{proposition}[theorem]{Proposition}
\newtheorem{remark}[theorem]{Remark}
\newtheorem{assumption}[theorem]{Assumption}
\newtheorem{notation}[theorem]{Notation}
\newtheorem*{definition*}{Definition}
\newtheorem*{theorem*}{Theorem}
\newtheorem*{corollary*}{Corollary}
\newtheorem*{proposition*}{Proposition}
\newtheorem{problem}{Problem}
\newtheorem*{remarks}{Remarks}
\newtheorem*{claim}{Claim}
\newcommand{\N}{\mathbb{N}}
\newcommand{\Z}{\mathbb{Z}}
\newcommand{\C}{\mathbb{C}}
\newcommand{\e}{\varepsilon}
\newcommand{\trn}[1]{{\left\vert\kern-0.25ex\left\vert\kern-0.25ex\left\vert #1 
    \right\vert\kern-0.25ex\right\vert\kern-0.25ex\right\vert}}
\newcommand{\trnsmall}[1]{{\vert\kern-0.25ex\vert\kern-0.25ex\vert #1 
    \vert\kern-0.25ex\vert\kern-0.25ex\vert}}
\long\def\symbolfootnote[#1]#2{\begingroup%
\def\thefootnote{\fnsymbol{footnote}}\footnote[#1]{#2}\endgroup}
\numberwithin{equation}{section}
\begin{document}

\title[The compact operators on $c_0$ as a Calkin algebra]{The compact operators on $c_0$ as a Calkin algebra}
\dedicatory{Dedicated to Gilles Godefroy}

\author[P. Motakis]{Pavlos Motakis}

\address{Department of Mathematics and Statistics, York University, 4700 Keele Street, Toronto, Ontario, M3J 1P3, Canada}

\email{pmotakis@yorku.ca}

\author[D. Puglisi]{Daniele Puglisi}

\address{Department of Mathematics and Computer Sciences, University of Catania, Catania, 95125, Italy}

\email{dpuglisi@dmi.unict.it}

\thanks{The first author was supported by NSERC Grant RGPIN-2021-03639.}


\keywords{}

\subjclass[2020]{46B07, 46B25, 46B28.}

\begin{abstract}
For a Banach space $X$, let $\mathcal{L}(X)$ denote the algebra of all bounded linear operators on $X$ and let $\mathcal{K}(X)$ denote the compact operator ideal in $\mathcal{L}(X)$. The quotient algebra $\mathcal{L}(X)/\mathcal{K}(X)$ is called the Calkin algebra of $X$, and it is denoted $\mathpzc{Cal}(X)$. We prove that the unitization of $\mathcal{K}(c_0)$ is isomorphic as a Banach algebra to the Calkin algebra of some Banach space $\mathcal{Z}_{\mathcal{K}(c_0)}$. This Banach space is an Argyros-Haydon sum $(\oplus_{n=1}^\infty X_n)_\mathrm{AH}$ of a sequence of copies $X_n$ of a single Argyros-Haydon space $\mathfrak{X}_\mathrm{AH}$, and the external versus the internal Argyros-Haydon construction parameters are chosen from disjoint sets.
\end{abstract}

\maketitle


\tableofcontents

\section{Introduction}

The Calkin algebra of a Banach space $X$ (see, e.g., \cite{calkin:1941}, \cite{yood:1954}, or \cite{caradus:pfaffenberger:yood:1974}) is defined as $\mathpzc{Cal}(X) = \mathcal{L}(X)/\mathcal{K}(X)$, where $\mathcal{L}(X)$ denotes the algebra of bounded linear opetors on $X$ and $\mathcal{K}(X)$ denotes its ideal of compact operators. Given a unital Banach algebra $\mathcal{B}$, showing that it is a Calkin algebra means finding a Banach space $X$ such that $\mathpzc{Cal}(X)$ is isomorphic to $\mathcal{B}$ as a Banach algebra. This is a challenging problem; solving it demands designing $X$ having in mind the algebraic structure of the indirectly defined object $\mathcal{L}(X)$ and the position of $\mathcal{K}(X)$ in it. The starting point of this field of study is the Argyros-Haydon space $\mathfrak{X}_\mathrm{AH}$ (see \cite{argyros:haydon:2011}) with $\mathpzc{Cal}(\mathfrak{X}_\mathrm{AH})$ one-dimensional, i.e., it has the scalar-plus-compact property. The construction of $\mathfrak{X}_\mathrm{AH}$ relies on immensely influential works by Tsirelson (\cite{tsirelson:1974}), Bourgain and Delbaen (\cite{bourgain:delbaen:1980}), Schlumprecht (\cite{schlumprecht:1991}), and Gowers and Maurey (\cite{gowers:maurey:1993}). Following \cite{argyros:haydon:2011}, several examples of explicit unital Banach algebras $\mathcal{B}$ have been shown to be Calkin algebras. Some of them are the convolution algebra $\ell_1(\N_0)$ (\cite{tarbard:2013}) and all separable commutative $C^*$-algebras (\cite{motakis:puglisi:zisimopoulou:2016} and \cite{motakis:2024}). Other examples include the Hilbert space $\ell_2$ with coordinate-wise multiplication induced by an orthonormal basis (this is the first known reflexive infinite dimensional Calkin algebra) (\cite{motakis:pelczar:2024}, see also \cite{pelczar-barwacz:2023}) and the quasireflexive James space with coordinate-wise multiplication induced by its standard shrinking basis (\cite{motakis:puglisi:tolias:2020}). The aforementioned examples are commutative; non-commutative ones also exist, but are confined to spaces of the type $\mathcal{B} = \oplus_{i=1}^n M_{k_i}(\mathcal{B}_i)$ for some $\mathcal{B}_i$ from the previous lists (\cite[Page 1022, Note added in proof]{kania:laustsen:2017} and \cite[Corollary 8.8]{motakis:2024}). The endeavour of finding more involved explicit non-commutative Calkin algebras was sparked by a set of questions of N. C. Phillips focusing primarily on $C^*$-algebras (see \cite[Section 9]{motakis:2024}). We contribute to this by proving the following.

\begin{theorem*}
There exists a Banach space $\mathcal{Z}_{\mathcal{K}(c_0)}$ with Calkin algebra isomorphic to the unitization of $\mathcal{K}(c_0)$ as a Banach algebra.
\end{theorem*}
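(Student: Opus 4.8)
The plan is to realise $\mathcal{Z}_{\mathcal{K}(c_0)}$ as a Bourgain--Delbaen $\mathcal{L}_\infty$-space obtained by running the Argyros--Haydon machine on two levels. An \emph{inner} construction produces a single space $\mathfrak{X}_\mathrm{AH}$ with the scalar-plus-compact property, and an \emph{outer} construction glues a sequence $(X_n)$ of copies of $\mathfrak{X}_\mathrm{AH}$ into an Argyros--Haydon sum $\mathcal{Z}:=(\oplus_n X_n)_\mathrm{AH}$, in which the $X_n$ form a $c_0$-like decomposition: they are complemented via uniformly bounded projections $P_n$ and embeddings $\iota_n$, and finite blocks supported on distinct summands have norm comparable to the largest of their norms. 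The decisive design choice is that the special functionals and coding of the inner level and of the outer level draw their weight parameters from disjoint sets $\Lambda_\mathrm{in},\Lambda_\mathrm{ext}$; this forces the two scales never to interact in any rapidly increasing sequence (RIS) analysis, which is what keeps the operator theory of $\mathcal{Z}$ tractable. Along the way one records that $\mathcal{Z}$ is separable, is an $\mathcal{L}_{\infty,1}$-space with $\mathcal{Z}^*=\ell_1$, and that the partial sums of its Bourgain--Delbaen decomposition are uniformly bounded.

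Next one develops the Argyros--Haydon toolkit on $\mathcal{Z}$: RIS, the basic inequality, $\ell_1^k$-averages and the dichotomy they yield, and interval-projection and small-perturbation estimates, available separately on the inner and outer scales precisely because of the parameter disjointness. Two structural facts are extracted. First, each summand $X_n$ keeps the scalar-plus-compact property, so every bounded operator $X_n\to X_n$ equals $\lambda I_{X_n}$ plus a compact operator. Second, for $m\neq n$ there is a distinguished bounded transfer operator $u_{mn}\colon X_n\to X_m$ (a suitably normalised canonical isomorphism between the two copies of $\mathfrak{X}_\mathrm{AH}$), and modulo compacts every bounded operator $X_n\to X_m$ is a scalar multiple of $u_{mn}$ — this last point needing little beyond the scalar-plus-compact property of $\mathfrak{X}_\mathrm{AH}$ and $X_n\cong X_m$. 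Writing $\tilde u_{mn}=\iota_m u_{mn}P_n\in\mathcal{L}(\mathcal{Z})$, one checks $\tilde u_{mn}\tilde u_{kl}\equiv\delta_{nk}\,\tilde u_{ml}$ modulo $\mathcal{K}(\mathcal{Z})$, so that the $\tilde u_{mn}$ behave like matrix units and, together with $I_\mathcal{Z}$ (in the role of the adjoined unit), generate modulo compacts a copy of $\mathbb{C}I_{c_0}+\mathcal{K}(c_0)=\mathcal{K}(c_0)^+\subseteq\mathcal{L}(c_0)$.

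The core of the proof is the converse: that these are \emph{all} the operators modulo compacts. One decomposes an arbitrary $T\in\mathcal{L}(\mathcal{Z})$ along the $X_n$ into a matrix of operators $P_mT\iota_n$, whose $(m,n)$-entry is, by the structural facts, $\lambda_n I_{X_n}+(\text{compact})$ or $a_{mn}u_{mn}+(\text{compact})$. Using the basic inequality and the $c_0$-like outer structure one must then show: the diagonal scalars $\lambda_n$ converge, say to $\mu$; the coefficients $(a_{mn})$ form a \emph{compact} operator on $c_0$; the remainder $T-\mu I_\mathcal{Z}-\sum_{m,n}a_{mn}\tilde u_{mn}$ is genuinely compact on $\mathcal{Z}$; and, conversely, every $(a_{mn})\in\mathcal{K}(c_0)$ is realised by a bounded operator $\sum_{m,n}a_{mn}\tilde u_{mn}$ on $\mathcal{Z}$ whose norm is comparable to $\|(a_{mn})\|_{\mathcal{L}(c_0)}$. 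Then $\Phi(T+\mathcal{K}(\mathcal{Z}))=\mu I_{c_0}+(a_{mn})$ is well defined, bounded, bounded below, and — by the matrix-unit relations above — multiplicative, giving $\mathpzc{Cal}(\mathcal{Z})\cong\mathcal{K}(c_0)^+$ as Banach algebras.

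The main obstacle is exactly this two-sided rigidity. For the surjectivity/realisation direction one must verify that the transfer operators assemble into honest bounded operators on the Argyros--Haydon sum for \emph{precisely} the compact patterns $(a_{mn})$, and — crucially — that a non-compact pattern, such as the shift $\sum_n\tilde u_{n+1,n}$ or a non-convergent diagonal, which would be perfectly bounded on the plain $c_0$-sum, is \emph{unbounded} on $\mathcal{Z}$; it is precisely this that the outer Argyros--Haydon modification of the norm has to accomplish, and designing it so is the delicate part of the construction. For the "no exotic operators" direction — that a bounded $T$ cannot escape $\mu I_\mathcal{Z}+(\text{compact }c_0\text{-matrix of transfers})+\mathcal{K}(\mathcal{Z})$ — one has to bring the full Argyros--Haydon apparatus to bear: RIS and the basic inequality to control off-diagonal behaviour, the passage from strict singularity to compactness on the relevant pieces, and the inner/outer parameter disjointness to prevent the two scales from producing unexpected operators.
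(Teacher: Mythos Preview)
Your construction is exactly the paper's: an outer Argyros--Haydon sum of copies of an inner Argyros--Haydon space $\mathfrak{X}_\mathrm{AH}$, with the inner and outer parameter sequences $(m_j,n_j)$ drawn from disjoint index sets. The role of parameter disjointness in decoupling the two RIS scales is also correctly identified.

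Where you diverge from the paper is in the \emph{organization} of the operator-theoretic half. You propose a direct matrix analysis: extract the block matrix of a bounded $T$, argue each entry is scalar-times-transfer plus compact, and then work to show the scalar pattern lands in $\mathcal{K}(c_0)^+$ and the remainder is compact. The paper instead introduces an abstract intermediary. It defines the closed subalgebra $\mathcal{HA}(X)\subset\mathcal{L}(X)$ of \emph{horizontally approximable} operators (those with $T=\lim_n P_nTP_n$ in operator norm) and a notion of \emph{finite Fredholm equivalence} between Schauder decompositions, under which the quotients $\mathcal{HA}/\mathcal{K}$ are isomorphic as Banach algebras. The argument then runs: (i) any Bourgain--Delbaen-$\mathscr{L}_\infty$ sum of $(X_n)$ is automatically finitely Fredholm equivalent to $(\oplus X_n)_{c_0}$, so $\mathcal{HA}(\mathcal{Z})/\mathcal{K}(\mathcal{Z})\cong\mathcal{HA}(c_0(\mathfrak{X}_\mathrm{AH}))/\mathcal{K}(c_0(\mathfrak{X}_\mathrm{AH}))$; (ii) the right-hand side is $\mathcal{K}(c_0)$ by an elementary computation using only that $\mathfrak{X}_\mathrm{AH}$ is infinite-dimensional with the scalar-plus-compact property; and (iii) every bounded operator on $\mathcal{Z}$ is $\lambda I+A$ with $A\in\mathcal{HA}(\mathcal{Z})$. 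Step (iii) is obtained by citing Zisimopoulou's theorem that $\mathcal{Z}$ has the scalar-plus-horizontally-\emph{compact} property, together with a short incomparability lemma (this is where the disjoint parameters enter) upgrading horizontally compact to horizontally approximable.

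What the paper's route buys is modularity: the hard Argyros--Haydon analysis is confined to one cited black box plus one genuine lemma, while everything else is soft functional analysis that would apply verbatim to any Bourgain--Delbaen sum. Your direct route is sound, but it re-derives the same content inside the matrix bookkeeping, and your framing of ``designing the construction so that the shift is unbounded'' as a separate delicate step is slightly misleading. That is not an additional design constraint to be engineered; it is an automatic consequence of the scalar-plus-horizontally-compact property that the standard outer Argyros--Haydon construction already delivers. One never tests individual non-compact patterns: once $T-\lambda I$ is known to be horizontally approximable, its class is forced to lie in $\mathcal{HA}(\mathcal{Z})/\mathcal{K}(\mathcal{Z})\cong\mathcal{K}(c_0)$.
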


The Banach space $\mathcal{Z}_{\mathcal{K}(c_0)}$ is a Bourgain-Delbaen-$\mathscr{L}_\infty$ sum  of Bourgain-Delbaen-$\mathscr{L}_\infty$-spaces. Bourgain-Delbaen-$\mathscr{L}_\infty$-spaces (introduced in \cite{bourgain:delbaen:1980}, see also \cite{argyros:gasparis:motakis:2016}) comprise an important class that has played a major role in Banach space theory. Prominent examples are the original Bourgain-Delbaen space from \cite{bourgain:delbaen:1980}, and the Argyros-Haydon space from \cite{argyros:haydon:2011}, but there are many others. For example, in \cite{kurka:2019}, such spaces were used to negatively solve Godefroy's problem from \cite{godefroy:2010} on whether the isomorphism class of $c_0$ is Borel. Another application is from \cite{causey:2020} where it was proved that such a space from \cite{argyros:gasparis:motakis:2016} is a counterexample to a conjecture of Godefroy, Kalton, and Lancien from \cite{godefroy:kalton:lancien:2001} stating an $\ell_1$-predual with summable Slenk should be isomorphic to $c_0$. In this paper, we use Bourgain-Delbaen-$\mathscr{L}_\infty$ sums from \cite{zisimopoulou:2014} similarly to \cite{motakis:puglisi:zisimopoulou:2016}. The Banach space $\mathcal{Z}_{\mathcal{K}(c_0)}$ is a special type of Bourgain-Delbaen-$\mathscr{L}_\infty$-sum: it is an Argyros-Haydon-sum $(\oplus_{n=1}^\infty X_n)_\mathrm{AH}$, and each $X_n$ is a copy of a single Argyros-Haydon space $\mathfrak{X}_\mathrm{AH}$. The classical Argyros-Haydon construction uses as parameter a sequence of pairs or natural numbers $(m_j,n_j)_{j=1}^\infty$ satisfying certain growth conditions. Here, the space $\mathfrak{X}_\mathrm{AH}$ has parameter a sequence of pairs $(m_j,n_j)_{j\in L_1}$ whereas the external Argyros-Haydon has parameter a sequence of pairs $(m_j,n_j)_{j\in L_2}$, and the sets $L_1$ and $L_2$ are disjoint infinite subsets of $\N$. As such, our construction relies on directly applying known techniques and does not involve radical modifications of heavy machinery.

We postpone the details of the construction of $\mathcal{Z}_{\mathcal{K}(c_0)}$ and proof of its main properties until Sections \ref{AH sums section}, \ref{AH sums operators section}, and \ref{ZKC0 section}. We initially focus on the principles that yield that its Calkin algebra is the unitization of $\mathcal{K}(c_0)$. Recall that a {\em Schauder decomposition} of a Banach space $X$ is a sequence of closed subspaces $(X_n)_{n=1}^\infty$ of $X$ such that each $x\in X$ admits a unique representation $x = \sum_{n=1}^\infty x_n$, where $x_n\in X_n$, for all $n\in\N$. This induces a sequence of bounded linear projections $P_n:X\to X$, $n\in\N$, where $P_nx = \sum_{k=1}^nx_k$ and $C = \sup_n\|P_n\|$ is finite. If, in particular, $C=1$, then the Schauder decomposition is called {\em monotone}. If $\cup_{n=1}^\infty P_n^*(X^*)$ is dense in $X^*$ then we call $(X_n)_{n=1}^\infty$ {\em shrinking}. For every interval $I$ of $\N$ we also let $P_Ix = \sum_{k\in I}x_k$.

Given a Banach space $X$ with a Schauder decomposition $(X_n)_{n=1}^\infty$, every bounded linear operator $T:X\to X$ admits a matrix representation $(T_{m,n})_{m,n=1}^\infty$, where $T_{m,n}:X_n\to X_m$, $m,n\in\N$. This can be used to approximate $T$ in the strong operator topology. We define the algebra $\mathcal{HA}(X)$ of {\em horizontally approximable} operators on $X$ comprising the $T:X\to X$ for which this approximation is valid in the operator norm topology. When $(X_n)_{n=1}^\infty$ is shrinking, $\mathcal{K}(X)\subset\mathcal{HA}(X)$, and thus, $\mathcal{HA}(X)/\mathcal{K}(X)$ is a subalgebra of $\mathpzc{Cal}(X)$. We observe that if $X$ has the scalar-plus-compact property, then $\mathcal{HA}(c_0(X))/\mathcal{K}(c_0(X))$ coincides with $\mathcal{K}(c_0)$. We exploit this fact as follows. We introduce an equivalence condition between Schauder decompositions $(X_n)_{n=1}^\infty$ and $(Y_n)_{n=1}^\infty$ of Banach spaces $X$ and $Y$ respectively, called {\em finite Fredholm equivalence}. If it is satisfied, then $\mathcal{HA}(X)/\mathcal{K}(X)$ and $\mathcal{HA}(Y)/\mathcal{K}(Y)$ are isomorphic Banach algebras, even if $X$ and $Y$ are not isomorphic Banach spaces. We prove that in particular the standard Schauder decomposition of a Bourgain-Delbaen-$\mathscr{L}_\infty$-sum of a sequence of Banach spaces $(X_n)_{n=1}^\infty$ is finitely Fredholm equivalent to the standard Schauder decomposition of $(\oplus_{n=1}^\infty X_n)_0$. This is directly applicable to the following distinguishing features of $\mathcal{Z}_{\mathcal{K}(c_0)}$, equipped with its standard Schauder decomposition:
\begin{enumerate}[label=(\roman*),leftmargin=19pt]
    
    \item It is a Bourgain-Delbaen-$\mathscr{L}_\infty$ sum of countably many copies of a space $\mathfrak{X}_\mathrm{AH}$ with the scalar-plus compact property.

    \item Every bounded linear operator on $\mathcal{Z}_{\mathcal{K}(c_0)}$ is a scalar multiple of the identity plus a horizontally approximable operator.
    
\end{enumerate}
By the preceding discussion, $\mathcal{HA}(\mathcal{Z}_{\mathcal{K}(c_0)})/\mathcal{K}(\mathcal{Z}_{\mathcal{K}(c_0)})$ is isomorphic as a Banach algebra to $\mathcal{HA}(c_0(\mathfrak{X}_\mathrm{AH}))/\mathcal{K}(c_0(\mathfrak{X}_\mathrm{AH}))$, which, by the first item, coincides with $\mathcal{K}(c_0)$. The second property yields that $\mathcal{HA}(\mathcal{Z}_{\mathcal{K}(c_0)})/\mathcal{K}(\mathcal{Z}_{\mathcal{K}(c_0)})$ is of codimension-one in $\mathpzc{Cal}(X)$.

\section{Horizontally approximable operators}
In this section, we study the algebra $\mathcal{HA}(X)$ of horizontally approximable operators on a Banach space $X$ with a Schauder decomposition $(X_n)_{n=1}^\infty$. This is a subcollection of the algebra of horizontally compact operators, introduced in \cite[Definition 7.1]{zisimopoulou:2014} (see also \Cref{HC definition}). An operator $T:X\to X$ is horizontally approximable if the finite initial submatrices of its standard representation as an infinite matrix $(T_{m,n})_{m,n=1}^\infty$ with respect to $(X_n)_{n=1}^\infty$ can be used to approximate $T$ in operator norm. When $(X_n)_{n=1}^\infty$ is shrinking, $\mathcal{K}(X)\subset\mathcal{HA}(X)$, and thus, $\mathcal{HA}(X)/\mathcal{K}(X)$ is a subalgebra of the Calkin algebra of $X$. For example, if $X$ is an infinite dimensional Banach space with the scalar-plus-compact property, we prove that $\mathcal{HA}(c_0(X))/\mathcal{K}(c_0(X))$ coincides with $\mathcal{K}(c_0)$. We then provide an equivalence condition between Schauder decompositions $(X_n)_{n=1}^\infty$ and $(Y_n)_{n=1}^\infty$ of Banach spaces $X$ and $Y$ respectively. When it is satisfied, $\mathcal{HA}(X)/\mathcal{K}(X)$ and $\mathcal{HA}(Y)/\mathcal{K}(Y)$ are isomorphic as Banach algebras, even if $X$ and $Y$ are not isomorphic.

\begin{definition}
Let $X$ be a Banach space with a Schauder decomposition $(X_n)_{n=1}^\infty$ and associated sequence of projections $(P_n)_{n=1}^\infty$.
\begin{enumerate}[leftmargin=19pt,label=(\alph*)]
    
\item A bounded linear operator $A$ on $X$ is called {\em horizontally approximable} if any of the following equivalent conditions is satisfied.\vskip6pt
\begin{enumerate}[label=(\greek*)]
    \item $A = \lim_n AP_n$ and  $A = \lim_nP_nA$,\vskip6pt
    \item $A = \lim_m\lim_nP_mAP_n$,\vskip6pt
    \item $A = \lim_n\lim_mP_mAP_n$,\vskip6pt
    \item $A = \lim_nP_nAP_n$.\vskip6pt
\end{enumerate}
where all limits are with respect to the operator norm topology.

\item The collection of horizontally approximable bounded linear operators on $X$ is denoted $\mathcal{HA}(X)$.

\end{enumerate}
The definitions of horizontally approximable operators and $\mathcal{HA}(X)$ depend on the Schauder decomposition $(X_n)_{n=1}^\infty$, but this will always be fixed, and thus, we suppress it in the notation.
\end{definition}

\begin{remark}
Let $X$ be a Banach space with a Schauder decomposition $(X_n)_{n=1}^\infty$. We will identify a $T\in\mathcal{L}(X)$ with its standard matrix representation $(T_{m,n})_{m,n=1}^\infty$ with respect to $(X_n)_{n=1}^\infty$, i.e., for $m,n\in\N$, $T_{m,n} = P_{\{m\}}TP_{\{n\}}:X\to X$. Letting ``$\mathrm{sot}$'' denote the strong operator topology of $\mathcal{L}(X)$, the following hold.
\begin{enumerate}[leftmargin=19pt,label=(\roman*)]
    
    \item If $T = (T_{m,n})_{m,n=1}^\infty$ then
\[T =\mathrm{sot}\text{-}\sum_{m=1}^\infty\mathrm{sot}\text{-}\sum_{n=1}^\infty T_{m,n} = \mathrm{sot}\text{-}\sum_{n=1}^\infty\mathrm{sot}\text{-}\sum_{m=1}^\infty T_{m,n}.\]

\item If $T = (T_{m,n})_{m,n=1}^\infty$ and $S = (S_{m,n})_{n=1}^\infty$ then
\[TS = \Big(\mathrm{sot}\text{-}\sum_{k=1}^\infty T_{m,k}S_{k,n}\Big)_{m,n=1}^\infty.\]

\end{enumerate}
We also sometimes naturally identify each $T_{m,n}$ with a map $T_{m,n}:X_n\to X_m$ by restricting its domain and codomain. It will always be clear from context which map the notation $T_{m,n}$ refers to.
\end{remark}

\begin{remarks}
Let $X$ be a Banach space with a Schauder decomposition $(X_n)_{n=1}^\infty$.
\begin{enumerate}[leftmargin=21pt,label=(\roman*)]

    \item  $\mathcal{HA}(X)$ is a closed subalgebra of $\mathcal{L}(X)$ and
    \[\mathcal{HA}_{00}(X) = \cup_{n=1}^\infty\big\{A\in\mathcal{L}(X): A = P_nAP_n\big\}\]
    is dense in $\mathcal{HA}(X)$. An operator $T = (T_{m,n})_{m,n=1}^\infty\in\mathcal{L}(X)$ is in $\mathcal{HA}_{00}(X)$ if and only if all but finitely many of its entries are zero.

    \item An operator $T = (T_{m,n})_{m,n=1}^\infty\in\mathcal{L}(X)$ is in $\mathcal{HA}(X)$ if and only if
    \[T = \sum_{m=1}^\infty\sum_{n=1}^\infty T_{m,n} = \sum_{n=1}^\infty\sum_{m=1}^\infty T_{m,n},\]
where the convergence is in the operator norm topology.
    
    \item If $A = (A_{m,n})_{m,n=1}^\infty$, $B = (B_{m,n})_{m,n=1}^\infty$ are in $\mathcal{HA}(X)$ then
    \[AB = \Big(\sum_{k=1}^\infty A_{m,k}B_{k,n}\Big)_{m,n=1}^\infty,\]
where the convergence is in the operator norm topology.

    \item If we additionally assume that the Schauder decomposition $(X_n)_{n=1}^\infty$ is shrinking, $\mathcal{K}(X)\subset \mathcal{HA}(X)$. In particular, $\mathcal{HA}(X)/\mathcal{K}(X)$ is a closed subalgebra of $\mathpzc{Cal}(X)$.

\end{enumerate}
\end{remarks}

\begin{notation}
For a sequence of Banach spaces $(X_n)_{n=1}^\infty$ we denote
    \[(\oplus_{n=1}^\infty X_n)_{0} = \Big\{(x_n)_{n=1}^\infty\in \prod_{n=1}^\infty X_n:\lim_n\|x_n\| = 0\Big\}\]
with $\|(x_n)_{n=1}^\infty\| = \max_n\|x_n\|$. If $X$ is a Banach space and, for all $n\in\N$, $X_n = X$, then we denote this space $c_0(X)$.
\end{notation}

\begin{remark}
For a sequence of Banach spaces $(X_n)_{n=1}^\infty$, the space $Y = (\oplus_{n=1}^\infty X_n)_{0}$ admits a standard shrinking Schauder decomposition $(\hat X_n)_{n=1}^\infty$ where, for $n\in\N$,
\[\hat X_n = \Big\{(x_m)_{m=1}^\infty\in Y: x_m = 0\text{ for }m\neq n\Big\}.\]
\end{remark}

\begin{remark}
We identify the Banach space $c_0(\ell_1)$ with all infinite scalar matrices $A = (a_{m,n})_{m,n=1}^\infty$ such that
\begin{gather}
\|A\| = \sup_m\sum_{n=1}^\infty|a_{m,n}|<\infty\text{ and}\\
\lim_m\Big(\sum_{n=1}^\infty|a_{m,n}|\Big) = 0.
\end{gather}
This is then naturally isometrically identified with $\mathcal{K}(c_0)$ by letting for $A = (a_{m,n})_{m,n=1}^\infty$, $Ae_n = \sum_{m=1}^\infty a_{m,n}e_m$. With this identification, usual matrix multiplication corresponds to composition of operators, and the collection of matrices $A$ with finitely many non-zero entries is dense in $\mathcal{K}(c_0)$.
\end{remark}

\begin{proposition}
\label{horizontally approximable by compact in c0 of scalar-plus-compact}
Let $X$ be an infinite dimensional Banach space with the scalar-plus-compact property and consider $c_0(X)$ with its standard shrinking Schauder decomposition. Then, $\mathcal{HA}(c_0(X))/\mathcal{K}(c_0(X))$ is isometrically isomorphic to $\mathcal{K}(c_0)$ as a Banach algebra.
\end{proposition}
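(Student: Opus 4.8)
The plan is to build the isomorphism by applying the canonical Calkin quotient of $X$ entrywise to matrix representations. Since $X$ has the scalar-plus-compact property and is infinite dimensional, $\mathcal{L}(X) = \C I_X \oplus \mathcal{K}(X)$ as an internal algebraic direct sum, and I let $Q\colon \mathcal{L}(X)\to\C$ be the unital multiplicative map with $Q(\lambda I_X + K) = \lambda$; it has norm at most $1$ and vanishes on $\mathcal{K}(X)$. For $T\in\mathcal{HA}(c_0(X))$ with matrix $(T_{m,n})_{m,n}$, each $T_{m,n}$ read as an operator on $X$, set $\Phi(T) = (Q(T_{m,n}))_{m,n}$. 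The goal is to show: (i) $\Phi(T)\in c_0(\ell_1) = \mathcal{K}(c_0)$ with $\|\Phi(T)\|\le\|T\|$; (ii) $\Phi$ is an algebra homomorphism; (iii) $\ker\Phi = \mathcal{K}(c_0(X))$; (iv) $\Phi$ is onto; (v) the induced bijection $\bar\Phi\colon\mathcal{HA}(c_0(X))/\mathcal{K}(c_0(X))\to\mathcal{K}(c_0)$ is isometric.

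The heart of the matter is the estimate that for every bounded $S = (S_{m,n})_{m,n}$ on $c_0(X)$ and every $m$ one has $\sum_{n}|Q(S_{m,n})|\le\|S\|$. To prove it, fix $m$ and a finite $F\subseteq\N$, write $S_{m,n} = Q(S_{m,n})I_X + K_{m,n}$ with $K_{m,n}\in\mathcal{K}(X)$, and observe that $y\mapsto (K_{m,n}y)_{n\in F}$ is a compact operator from $X$ into the finite $\ell_\infty$-sum $\bigoplus_{n\in F}X$. Since $X$ is infinite dimensional, a compact operator out of $X$ cannot be bounded below (otherwise its range, isomorphic to $X$, would be an infinite-dimensional normed space in which a ball is totally bounded, contradicting Riesz's lemma), so there are unit vectors $y_k\in X$ with $K_{m,n}y_k\to 0$ for every $n\in F$. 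Pick unimodular scalars $\theta_n$ with $\theta_nQ(S_{m,n}) = |Q(S_{m,n})|$ and test $S$ on the norm-one vectors $\xi_k\in c_0(X)$ supported on $F$ with $n$-th coordinate $\theta_ny_k$: the $m$-th coordinate of $S\xi_k$ equals $\big(\sum_{n\in F}|Q(S_{m,n})|\big)y_k + e_k$ with $\|e_k\|\le\sum_{n\in F}\|K_{m,n}y_k\|\to 0$, whence $\sum_{n\in F}|Q(S_{m,n})|\le\|S\|$; letting $F$ exhaust $\N$ proves the estimate. Applying it with $S = T$ gives $\sup_m\sum_n|Q(T_{m,n})|\le\|T\|$, and applying it with $S = T - P_NT$, whose rows past $N$ coincide with those of $T$, gives $\sup_{m>N}\sum_n|Q(T_{m,n})|\le\|T-P_NT\|\to 0$ by the defining property $T = \lim_N P_NT$ of $\mathcal{HA}$; hence $\Phi(T)\in c_0(\ell_1)$, proving (i). Since $\Phi$ vanishes on $\mathcal{K}(c_0(X))$, it also follows that $\|\Phi(T)\|\le\operatorname{dist}(T,\mathcal{K}(c_0(X)))$.

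For (ii), $Q$ is linear, continuous and multiplicative and, as recalled in the preliminary remarks, products in $\mathcal{HA}(c_0(X))$ are computed entrywise by operator-norm-convergent series, so $\Phi(AB)=\Phi(A)\Phi(B)$; under the identification $c_0(\ell_1)\cong\mathcal{K}(c_0)$ this is ordinary matrix multiplication. For (iii), $\mathcal{K}(c_0(X))\subseteq\ker\Phi$ because each $T_{m,n}$ of a compact $T$ is compact on $X$ and $Q$ kills compacts; conversely if $T\in\mathcal{HA}(c_0(X))$ and $\Phi(T)=0$ then every $T_{m,n}$ is compact on $X$, so each $P_NTP_N$ is a finite sum of operators obtained by pre- and post-composing a compact operator $X\to X$ with coordinate maps, hence compact on $c_0(X)$, and $T=\lim_N P_NTP_N$ in norm forces $T\in\mathcal{K}(c_0(X))$. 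For (iv), given $M=(\mu_{m,n})_{m,n}\in c_0(\ell_1)$ the operator $T'$ on $c_0(X)$ given by $T'(x_n)_n=\big(\sum_n\mu_{m,n}x_n\big)_m$ is bounded with $\|T'\|\le\|M\|$ by the triangle inequality (and $\lim_m\sum_n|\mu_{m,n}|=0$ guarantees its image lies in $c_0(X)$), belongs to $\mathcal{HA}(c_0(X))$ since $\|T'-P_NT'P_N\|\le\|M-M_N\|\to 0$, where $M_N$ is the top-left $N\times N$ corner of $M$ and finitely supported matrices are dense in $\mathcal{K}(c_0)$, and satisfies $\Phi(T')=M$.

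Finally, (v): $\bar\Phi$ is a bijective algebra homomorphism by (ii)--(iv); it satisfies $\|\bar\Phi(\tau)\|\le\|\tau\|$ because the quotient norm on $\mathcal{HA}(c_0(X))/\mathcal{K}(c_0(X))$ equals the essential norm (as $\mathcal{K}(c_0(X))\subseteq\mathcal{HA}(c_0(X))$, the decomposition being shrinking) and $\|\Phi(T)\|\le\operatorname{dist}(T,\mathcal{K}(c_0(X)))$ by (i); conversely each class $\tau$ is represented, via (iv) applied to $M=\bar\Phi(\tau)$, by an operator $T'$ with $\|\tau\|\le\|T'\|\le\|M\|=\|\bar\Phi(\tau)\|$, so $\bar\Phi$ is isometric, hence an isometric isomorphism of Banach algebras. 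The main obstacle is the row estimate $\sum_n|Q(S_{m,n})|\le\|S\|$: its proof must produce a single sequence of test vectors in $c_0(X)$ that simultaneously detects the scalar parts of a whole row while washing out all of the finitely many compact parts, which is precisely where infinite dimensionality of $X$ and compactness of the $K_{m,n}$ are used; the remaining steps are routine manipulations of matrix representations together with the algebraic form of the scalar-plus-compact property.
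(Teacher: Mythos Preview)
Your proof is correct. The approach is dual to the paper's but built on the same idea. The paper defines the isomorphism in the direction $\mathcal{K}(c_0)\to\mathcal{HA}(c_0(X))/\mathcal{K}(c_0(X))$ via $A=(a_{m,n})\mapsto[\hat A]$ with $\hat A=(a_{m,n}I_X)$ (this is exactly your lifting $T'$ in step~(iv)), and then proves directly that $\|\hat A-K\|\ge\|A\|$ for every compact $K$ by inductively choosing nested infinite-dimensional subspaces $Z_1\supset\cdots\supset Z_N$ on which $K$ is small. You instead build the map in the opposite direction by applying the scalar part $Q$ entrywise, and your key technical input is the row estimate $\sum_n|Q(S_{m,n})|\le\|S\|$ valid for an arbitrary bounded $S$, proved by finding a single sequence of unit vectors that simultaneously kills finitely many compact perturbations. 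The two arguments exploit infinite-dimensionality in the same way (compact operators out of $X$ are not bounded below); your formulation is slightly more general and also yields the $c_0$-decay of the rows of $\Phi(T)$ as an immediate corollary of $T=\lim_NP_NT$, whereas the paper handles well-definedness and isometry of its map separately. Both routes are equally short.
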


\begin{proof}
Denote $Y=c_0(X)$. For $A = (a_{m,n})_{m,n=1}^\infty$ in $\mathcal{K}(c_0)$ with all but finitely many zero entries we define $\hat A = (a_{m,n}I_X)_{m,n=1}^\infty\in\mathcal{HA}_{00}(Y)$, i.e., for $(x_n)_{n=1}^\infty\in Y$,
\[\hat A (x_n)_{n=1}^\infty = \Big(\sum_{n=1}^\infty a_{m,n}x_n\Big)_{m=1}^\infty.\]
Let us verify that $\|\hat A\|\leq \|A\|$. Indeed, if $(x_n)_{n=1}^\infty$ is in the unit ball of $Y$, then
\[\big\|\hat A\big((x_n)_{n=1}^\infty\big)\big\| = \max_m\Big\|\sum_{n=1}^\infty a_{m,n}x_n\Big\| \leq \max_m\sum_{n=1}^\infty |a_{m,n}| = \|A\|.\]
It is then entirely straightforward to check that $A\mapsto \hat A$ extends to a non-expansive linear homomorphism from $\mathcal{K}(c_0)$ to $\mathcal{L}(Y)$. If we denote $\mathcal{Q}:\mathcal{HA}(Y)\to \mathcal{HA}(Y)/\mathcal{K}(Y)$ the quotient map, then $A\mapsto \mathcal{Q}\hat A$ is such a map as well. We will use the infinite-dimensionality of $X$ to also easily obtain $\|A\| \leq \|\mathcal{Q}\hat A\|$. Indeed, let $A = (a_{m,n})_{m,n=1}^\infty\in\mathcal{K}(c_0)$ with all but finitely many zero entries and choose $M,N\in\N$ such that, for all $(x_n)_{n=1}^\infty\in Y$,
\[\hat A\big((x_n)_{n=1}^\infty\big) = \Big(\sum_{n=1}^Na_{1,n}x_n,\ldots,\sum_{n=1}^Na_{M,n}x_n,0,0,\ldots\Big).\]
Let $K:Y\to Y$ be a compact operator and fix $\e>0$. For every subspace $Z$ of $X$ and $n\in\N$ let
\[\hat Z(n) = \big\{(0,\ldots0,x_n,0,\ldots):x_n\in Z\big\},\]
which is a subspace of $Y$. By the compactness of $K$, we may inductively choose infinite-dimensional subspaces $Z_1\supset Z_2\supset\cdots\supset Z_N = Z$ of $X$ such that, for $n=1,\ldots,N$, $\|K|_{\hat Z(n)}\|<\e/N$. Then, for every $x\in Z$ of norm one and signs $(\e_n)_{n=1}^N$,
\[\Big\|\big(\hat A-K\big)\big((\e_1x,\e_2x,\ldots,\e_Nx,0,\ldots)\big)\Big\| \geq \max_{1\leq m\leq M}\Big|\sum_{n=1}^N\e_na_{m,n}\Big| - \e.\]
In particular, $\|\hat A - K\| \geq \|A\| - \e$. Thus $A\mapsto \mathcal{Q}\hat A$ is an isometric linear homomorphism.

To show that $A\mapsto \mathcal{Q}\hat A$ is onto, let $B = (S_{m,n})_{m,n=1}^\infty\in\mathcal{HA}_{00}(Y)$. Because $X$ has the scalar-plus-compact property, for $m,n\in\N$, there are a scalar $a_{m,n}$ and a compact operator $K_{m,n}:X\to X$ such that $S_{m,n}=a_{m,n}I_X + K_{m,n}$ and $a_{m,n}$, $K_{m,n}$ are non-zero for for only finitely many $m,n\in\N$. Consider $A = (a_{m,n})_{m,n=1}^\infty\in\mathcal{K}(c_0)$ and note that $\hat A - B\in\mathcal{K}(Y)$, i.e., $\mathcal{Q}\hat A = \mathcal{Q}B$. By the density of $\mathcal{HA}_{00}(Y)$ in $\mathcal{HA}(Y)$, $A\mapsto \mathcal{Q}\hat A$ is onto.
\end{proof}

\begin{notation}
For Banach spaces $X$ and $Y$ and bounded linear operators $T,S:X\to Y$ we will write $T\sim S$ to mean $T-S\in\mathcal{K}(X,Y)$.
\end{notation}

\begin{definition}
\label{finitely-Fredhold-equivalent}
Let $X$ and $Y$ be Banach spaces with Schauder decompositions $(X_n)_{n=1}^\infty$, $(Y_n)_{n=1}^\infty$ respectively and associated projections $(P_n)_{n=1}^\infty$, $(Q_n)_{n=1}^\infty$ respectively.
We call $(X_n)_{n=1}^\infty$ and $(Y_n)_{n=1}^\infty$ {\em $C$-finitely Fredholm equivalent}, for some $C\geq 1$, if there exist bounded linear operators $L_n:X\to Y$ and $R_n:Y\to X$, $n\in\N$, that satisfy
\begin{enumerate}[label=(\greek*)]
    
    \item\label{finitely-Fredhold-equivalent-a} the {\em uniform boundedness condition}
    \[\big(\sup_n\|L_n\|\big)\big(\sup_n\|R_n\|\big) \leq C,\]

    \item\label{finitely-Fredhold-equivalent-b} the {\em compatibility condition}
    \begin{align*}
    L_m\sim L_nP_m\sim Q_mL_n\text{ and }R_m\sim R_nQ_m\sim P_mR_n,
    \end{align*}
    for $m\leq n\in\N$,
    \item\label{finitely-Fredhold-equivalent-c} and the {\em initial Fredholm invertibility condition}
    \begin{align*}
    R_nL_n\sim P_n\text{ and }L_nR_n\sim Q_n,
    \end{align*}
    for $n\in\N$.
\end{enumerate}
\end{definition}

\begin{proposition}
\label{hor-app-by-compact isomorphism}
Let $X$ and $Y$ be Banach spaces with $C$-finitely Fredholm equivalent shrinking Schauder decompositions, for some $C\geq 1$. Then, $\mathcal{HA}(X)/\mathcal{K}(X)$ and $\mathcal{HA}(Y)/\mathcal{K}(Y)$ are $C^2$-isomorphic as Banach algebras.
\end{proposition}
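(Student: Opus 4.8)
The plan is to transfer operators by ``conjugating'' with the $L_n$ and $R_n$ and to show that, modulo compacts, the result is insensitive to the index $n$. Fix operators $(L_n)_{n=1}^\infty$, $(R_n)_{n=1}^\infty$ witnessing $C$-finite Fredholm equivalence. First I would define a map on the dense subalgebra $\mathcal{HA}_{00}(X)$: given $A = P_nAP_n \in \mathcal{HA}_{00}(X)$, let $\Phi_0(A)$ be the class of $L_nAR_n$ in $\mathcal{HA}(Y)/\mathcal{K}(Y)$. Two points need checking. First, this is a legitimate element of the subalgebra $\mathcal{HA}(Y)/\mathcal{K}(Y)$: taking $m=n$ in \ref{finitely-Fredhold-equivalent-b} gives $Q_nL_n\sim L_n$ and $R_nQ_n\sim R_n$, so $L_nAR_n\sim Q_nL_nAR_nQ_n\in\mathcal{HA}_{00}(Y)\subset\mathcal{HA}(Y)$. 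Second, it is independent of the admissible $n$: if $n\le m$ both work, then $A=P_nAP_n$ gives $L_mAR_m=(L_mP_n)A(P_nR_m)$, and \ref{finitely-Fredhold-equivalent-b} yields $L_mP_n\sim L_n$ and $P_nR_m\sim R_n$, whence $L_mAR_m-L_nAR_n=(L_mP_n-L_n)A(P_nR_m)+L_nA(P_nR_m-R_n)\in\mathcal{K}(Y)$. Linearity of $\Phi_0$ is clear once one picks a common index for two given operators, and $\|\Phi_0(A)\|\le\|L_n\|\,\|R_n\|\,\|A\|\le C\|A\|$ by \ref{finitely-Fredhold-equivalent-a}.

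Next I would extend and descend. Since $\mathcal{HA}_{00}(X)$ is dense in $\mathcal{HA}(X)$ and $\mathcal{HA}(Y)/\mathcal{K}(Y)$ is complete, $\Phi_0$ extends uniquely to a bounded linear $\Phi\colon\mathcal{HA}(X)\to\mathcal{HA}(Y)/\mathcal{K}(Y)$ with $\|\Phi\|\le C$. To pass to the Calkin quotient I must show $\Phi$ vanishes on $\mathcal{K}(X)$. For compact $K$ one has $P_nK\to K$ in operator norm (true for any Schauder decomposition) and $KP_n\to K$ in operator norm (this is exactly where the \emph{shrinking} hypothesis enters, via $P_n^*\to I_{X^*}$ pointwise and compactness of $K^*$), hence $P_nKP_n\to K$; but $L_nP_nKP_nR_n$ is compact, so $\Phi(K)=\lim_n\Phi(P_nKP_n)=0$. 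Thus $\Phi$ factors through $\mathcal{HA}(X)/\mathcal{K}(X)$, producing a bounded linear $\widetilde\Phi\colon\mathcal{HA}(X)/\mathcal{K}(X)\to\mathcal{HA}(Y)/\mathcal{K}(Y)$ with $\|\widetilde\Phi\|\le C$.

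Then I would verify multiplicativity and construct an inverse. On the dense subalgebra, for $A,A'\in\mathcal{HA}_{00}(X)$ with $A=P_nAP_n$ and $A'=P_nA'P_n$ (common $n$), condition \ref{finitely-Fredhold-equivalent-c} gives $R_nL_n\sim P_n$, so $L_nAR_n\cdot L_nA'R_n\sim L_nAP_nA'R_n=L_n(AA')R_n$ (using $AP_n=A$), i.e.\ $\Phi_0(A)\Phi_0(A')=\Phi_0(AA')$, so $\widetilde\Phi$ is a homomorphism. Running the identical construction with the roles of $(L_n,P_n)$ and $(R_n,Q_n)$ interchanged — the hypotheses of \Cref{finitely-Fredhold-equivalent} are symmetric under this exchange — yields a bounded homomorphism $\widetilde\Psi\colon\mathcal{HA}(Y)/\mathcal{K}(Y)\to\mathcal{HA}(X)/\mathcal{K}(X)$ with $\|\widetilde\Psi\|\le C$. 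For $A=P_nAP_n\in\mathcal{HA}_{00}(X)$, $\widetilde\Phi$ sends the class of $A$ to the class of $B_n:=Q_nL_nAR_nQ_n\in\mathcal{HA}_{00}(Y)$, and $\widetilde\Psi$ sends that to the class of $R_nB_nL_n$; using \ref{finitely-Fredhold-equivalent-b} ($R_nQ_n\sim R_n$, $Q_nL_n\sim L_n$) and then \ref{finitely-Fredhold-equivalent-c} twice ($R_nL_n\sim P_n$), one gets $R_nB_nL_n\sim R_nL_nAR_nL_n\sim P_nAP_n=A$, so $\widetilde\Psi\widetilde\Phi$ is the identity on $\mathcal{HA}_{00}(X)$, hence on all of $\mathcal{HA}(X)/\mathcal{K}(X)$ by density; symmetrically $\widetilde\Phi\widetilde\Psi=\mathrm{id}$. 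Therefore $\widetilde\Phi$ is a Banach algebra isomorphism with $\|\widetilde\Phi\|\le C$ and $\|\widetilde\Phi^{-1}\|\le C$, i.e.\ a $C^2$-isomorphism.

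I expect the only non-routine step to be checking that $\widetilde\Phi$ is well-defined on the quotient, namely that $\Phi$ annihilates $\mathcal{K}(X)$; this is the point where the shrinking assumption is genuinely used, to obtain $KP_n\to K$ for compact $K$ in addition to the always-available $P_nK\to K$. Everything else is bookkeeping: keeping track of the directions of the relations $\sim$ in \ref{finitely-Fredhold-equivalent-b} and \ref{finitely-Fredhold-equivalent-c}, and observing that the ``finitely many nonzero entries'' property is preserved by all operations involved.
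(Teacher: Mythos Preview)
Your proof is correct and follows essentially the same approach as the paper: conjugate by $L_n,R_n$, check well-definedness modulo compacts via \ref{finitely-Fredhold-equivalent-b}, multiplicativity via \ref{finitely-Fredhold-equivalent-c}, extend from $\mathcal{HA}_{00}$ by density, and invert by the symmetric construction. The only minor difference is that the paper writes $\Phi(A)=\lim_n\mathcal{Q}_Y(L_nAR_n)$ directly on $\mathcal{HA}(X)$ and obtains $\mathcal{K}(X)\subset\ker\Phi$ in one line (each $L_nKR_n$ is already compact, so its class vanishes), whereas you route this through the approximation $P_nKP_n\to K$; both uses of shrinking amount to the inclusion $\mathcal{K}(X)\subset\mathcal{HA}(X)$.
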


\begin{proof}
Adopt the notation of \Cref{finitely-Fredhold-equivalent} and let $\mathcal{Q}_Y:\mathcal{L}(Y)\to\mathcal{L}(Y)/\mathcal{K}(Y)$ and $\mathcal{Q}_X:\mathcal{L}(X)\to\mathcal{L}(X)/\mathcal{K}(X)$ denote the quotient maps. For $n\in\N$ and $A\in\mathcal{HA}(X)$ we define
\[\phi_n(A) = L_nAR_n.\]
We note that, by \cref{finitely-Fredhold-equivalent-b}, $\phi_n(A) \sim \phi_n(P_nAP_n) \sim Q_n\phi_n(A)Q_n$, and thus,
\[\phi_n(A)\in \{B\in\mathcal{L}(Y):B = Q_nBQ_n\}+\mathcal{K}(Y)\subset\mathcal{HA}(Y).\]
Note that, for $A\in\mathcal{HA}_{00}(X) = \cup_{n=1}^\infty\{A\in\mathcal{L}(X):A=P_nAP_n\}$, the sequence $(\mathcal{Q}_Y\phi_n(A))_{n=1}^\infty$ is eventually constant. Indeed, if $m\in\N$ is such that $A = P_mAP_m$ then, for $n\geq m$, by \cref{finitely-Fredhold-equivalent-b},
\[\phi_n(A) = L_nAR_n = L_nP_mAP_mR_n\sim L_m AR_m = \phi_m(A).\]
Additionally, for $A,B\in\mathcal{HA}_{00}(X)$ and $n\in\N$ sufficiently large, $\mathcal{Q}_Y\phi_n(AB) = \mathcal{Q}_Y\phi_n(A)\mathcal{Q}_Y\phi_n(B)$. Indeed, if $n\in\N$ is such that $A=P_nAP_n$ and $B = P_nBP_n$ then, by \cref{finitely-Fredhold-equivalent-c},
\[\phi_n(AB) = L_nABR_n = L_nAP_nBR_n\sim L_nAR_nL_nBP_n = \phi_n(A)\phi_n(B).\]
We conclude that, by \cref{finitely-Fredhold-equivalent-a}, that the formula $A \mapsto \lim_n\mathcal{Q}_Y(L_nAR_n)$ defines a bounded homomorphism $\Phi:\mathcal{HA}(X)\to \mathcal{HA}(Y)/\mathcal{K}(Y)$ of norm at most $C$, and $\mathcal{K}(X)\subset \mathrm{ker}(\Phi)$. Let $\widehat\Phi:\mathcal{HA}(X)/\mathcal{K}(X)\to\mathcal{HA}(Y)/\mathcal{K}(Y)$ be the unique bounded homomorphism such that, for $A\in\mathcal{HA}(X)$, $\widehat \Phi(\mathcal{Q}_XA) = \Phi(A)$.

By interchanging the roles of $X$ and $Y$, the formula $B\mapsto \lim_n\mathcal{Q}_X(R_nBL_n)$ defines a bounded homomorphism $\Psi:\mathcal{HA}(Y)\to \mathcal{HA}(X)/\mathcal{K}(X)$ of norm at most $C$, and $\mathcal{K}(Y)\subset\mathrm{ker}(\Psi)$. Let $\widehat\Psi:\mathcal{HA}(Y)/\mathcal{K}(Y)\to\mathcal{HA}(X)/\mathcal{K}(X)$ be the unique bounded homomorphism such that, for $B\in\mathcal{HA}(Y)$, $\widehat \Psi(\mathcal{Q}_YB) = \Psi(B)$. We claim that $\hat\Phi$ and $\hat\Psi$ are each other's inverse, and to that end, by density, it suffices to prove
\[
\begin{split}
\hat \Psi\big(\hat\Phi(\mathcal{Q}_XA)\big) &= \mathcal{Q}_XA,\text{ for }A\in\mathcal{HA}_{00}(X)\text{ and}\\
\hat \Phi\big(\hat\Psi(\mathcal{Q}_YB)\big) &= \mathcal{Q}_YB,\text{ for }B\in\mathcal{HA}_{00}(Y).
\end{split}
\]
We only prove the former because the arguments to show each of these are identical. Let $A\in\mathcal{HA}_{00}(X)$, such that $\Phi(A) = \lim_n\mathcal{Q}_Y\phi_n(A)$. Recall that, if $m\in\N$ is such that $A = P_mAP_m$, then, for $n\geq m$, $\mathcal{Q}_Y\phi_n(A) = \mathcal{Q}_Y\phi_m(A)$. Therefore, $\Phi(A) =\mathcal{Q}_Y\phi_m(A)$ and
\[
\begin{split}
\hat \Psi\big(\hat\Phi(\mathcal{Q}_XA)\big) &= \hat \Psi\big(\Phi(A)\big) = \Psi\big(\phi_m(A)\big)=  \Psi(L_mAR_m)\\
&=\lim_n\mathcal{Q}_X(R_nL_mAR_mL_n).
\end{split}
\]
For $n\geq m$, by \ref{finitely-Fredhold-equivalent-b} and \ref{finitely-Fredhold-equivalent-c}, $R_nL_m\sim R_nQ_mL_m \sim R_mL_m \sim P_m$. Similarly, $R_mL_n\sim P_m$, and thus, $\hat \Psi\big(\hat\Phi(\mathcal{Q}_XA)\big) = \mathcal{Q}_X(P_mAP_m) = \mathcal{Q}_X(A)$.
\end{proof}

We remark that, under additional regularity assumptions on $Y$, $C$-finite Fredholm equivalence can be improved to 1-finite Fredholm equivalence, after renorming $X$.

\begin{proposition}
\label{general renorming}
Let $X$ and $Y$ be Banach spaces with $C$-finitely Fredholm equivalent Schauder decompositions $(X_n)_{n=1}^\infty$, $(Y_n)_{n=1}^\infty$ respectively, for some $C\geq 1$. Adopting the notation of \Cref{finitely-Fredhold-equivalent}, assume that $(Y_n)_{n=1}^\infty$ is monotone and, for every $m,n\in\N$, $L_m R_n = Q_{\min\{m,n\}}$. Then, after passing to a $C$-equivalent norm of $X$ but maintaining the original norm of $Y$, $(X_n)_{n=1}^\infty$, $(Y_n)_{n=1}^\infty$ are 1-finitely Fredholm equivalent.
\end{proposition}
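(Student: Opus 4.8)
The plan is to build the required $C$-equivalent norm on $X$ directly out of the operators $L_n$, so that each $L_n$ becomes a contraction by construction, and then, after a harmless rescaling and by also feeding in a small multiple of the original norm of $X$, to force each $R_n$ to become a contraction as well. Conditions \ref{finitely-Fredhold-equivalent-b} and \ref{finitely-Fredhold-equivalent-c} will then come for free: they only assert that certain fixed operators assembled from $L_n,R_n,P_n,Q_n$ belong to the relevant compact ideals, and compactness is a property of the equivalence class of the norm, not of the norm itself, so it is unaffected by the renorming.

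\emph{Rescaling.} First I would put $a_0=\sup_n\|L_n\|$ and $b_0=\sup_n\|R_n\|$; both are finite and positive, and since $L_nR_n=Q_n$ is a nonzero projection we have $a_0b_0\ge\|Q_1\|\ge1$. Replacing $L_n$ by $\sqrt{b_0/a_0}\,L_n$ and $R_n$ by $\sqrt{a_0/b_0}\,R_n$ leaves the products $R_nL_n$, $L_nR_n$, $L_mR_n$ unchanged, so \ref{finitely-Fredhold-equivalent-b}, \ref{finitely-Fredhold-equivalent-c} and the identity $L_mR_n=Q_{\min\{m,n\}}$ all persist, while \ref{finitely-Fredhold-equivalent-a} holds with the same constant $C$. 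Thus I may assume $\sup_n\|L_n\|=\sup_n\|R_n\|=a$, where $1\le a^2=a_0b_0\le C$.

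\emph{Renorming and verification.} I would then set
\[
\trn{x}=\max\Big\{\tfrac1a\|x\|_X,\ \sup_n\|L_nx\|_Y\Big\}.
\]
This is a norm on $X$, finite because $\sup_n\|L_nx\|_Y\le a\|x\|_X$, and $\tfrac1a\|x\|_X\le\trn{x}\le a\|x\|_X$ (the upper estimate uses $a\ge1$), so $\trn{\cdot}$ is $a^2$-equivalent, hence $C$-equivalent, to $\|\cdot\|_X$, and $(X_n)_{n=1}^\infty$ stays a Schauder decomposition with the same projections $P_n$. Keeping the original norm on $Y$, one reads $\|L_n\|\le1$ straight off the definition of $\trn{\cdot}$, while
\[
\trn{R_ny}=\max\Big\{\tfrac1a\|R_ny\|_X,\ \sup_m\big\|Q_{\min\{m,n\}}y\big\|_Y\Big\}\le\|y\|_Y,
\]
where the first entry is bounded by $\|y\|_Y$ using $\|R_n\|\le a$ and the second by the monotonicity bound $\|Q_k\|\le1$. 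So $\|R_n\|\le1$ as well, \ref{finitely-Fredhold-equivalent-a} holds with constant $1$, and, by the remark above, \ref{finitely-Fredhold-equivalent-b} and \ref{finitely-Fredhold-equivalent-c} carry over verbatim; hence $(X_n)_{n=1}^\infty$ and $(Y_n)_{n=1}^\infty$ are $1$-finitely Fredholm equivalent.

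\emph{Main obstacle.} The delicate point is the lower bound for the new norm. The naive choice $\trn{x}=\sup_n\|L_nx\|_Y$ need not be bounded below by a multiple of $\|x\|_X$, because \ref{finitely-Fredhold-equivalent-c} only provides $R_nL_n\sim P_n$ up to a compact perturbation that is not uniform in $n$; this is exactly why the term $\tfrac1a\|x\|_X$ is folded in, and why the preliminary rescaling to $\|L_n\|,\|R_n\|\le a$ with $a^2\le C$ is necessary—it is what prevents this extra term from destroying either the contractivity of the $R_n$ or the $C$-equivalence of the norms.
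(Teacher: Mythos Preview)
Your proof is correct and follows essentially the same approach as the paper: define the new norm as the maximum of a scaled copy of $\|\cdot\|_X$ and $\sup_n\|L_nx\|_Y$, then read off $\|L_n\|\le 1$ from the definition and $\|R_n\|\le 1$ from the identity $L_mR_n=Q_{\min\{m,n\}}$ together with monotonicity. The paper skips your preliminary symmetrizing rescaling and simply takes $\alpha=\sup_n\|R_n\|$ in place of your $a$, which already gives $C$-equivalence since $\alpha\cdot\max\{\alpha^{-1},\sup_n\|L_n\|\}\le C$; your rescaling is harmless but not needed.
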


\begin{proof}
Let $\|\cdot\|_X$, $\|\cdot\|_Y$ denote the original norms of $X$ and $Y$ respectively and put $\alpha = \sup_n\|R_n:(X,\|\cdot\|_X)\to (Y,\|\cdot\|_Y)\|$. Define the equivalent norm on $X$ given by
\[\trn{x}_X = \max\big\{\alpha^{-1}\|x\|_X,\sup_n\|L_nx\|_Y\big\}.\]
It is easy to check that $\trn{\cdot}_X$ is $C$-equivalent to $\|\cdot\|_X$. Then, by definition, for every $n\in\N$, $\|L_n:(X,\trn{\cdot}_X)\to (Y,\|\cdot\|_Y)\|\leq 1$. Also, for $m\in\N$ and $y\in Y$ such that $\|y\|_Y\leq 1$ we will show $\trn{R_my}_X\leq 1$. Indeed,
\[\alpha^{-1}\|R_my\|_X \leq \alpha^{-1}\|R_m:(Y,\|\cdot\|_Y)\to(X,\|\cdot\|_X)\|\|y\|_Y\leq 1\]
and, for $n\in\N$,
\[\|L_n(R_my)\|_Y = \|Q_{\min\{n,m\}}y\|_Y \leq 1.\]
This means that the Schauder decompositions of $(X,\trn{\cdot}_X)$ and $(Y,\|\cdot\|_Y)$ are 1-finitely Fredholm equivalent.
\end{proof}

\section{Bourgain-Delbaen-$\mathscr{L}_\infty$ sums of Banach spaces}
\label{abstract Bourgain-Delbaen sums}
We recall the notion of an abstract $C$-Bourgain-Delbaen-$\mathscr{L}_\infty$ sum $\mathcal{Z}$ of of a sequence of Banach spaces $(X_n)_{n=1}^\infty$, introduced in \cite[Definition 2.3]{zisimopoulou:2014}. This space $\mathcal{Z}$ admits a Schauder decomposition $(\mathcal{Z}_n)_{n=1}^\infty$ that we prove is $C$-finitely Fredholm equivalent to the standard Schauder decomposition of $(\oplus_{n=1}^\infty X_n)_0$. The easy consequence of this is that if each $X_n$ is a copy of a common Banach space $X$ with the scalar-plus-compact property and $\mathcal{Z}$ satisfies the scalar-plus-horizontally approximable property, then the Calkin algebra of such a $\mathcal{Z}$ is isomorphic, as a Banach algebra, to the unitization of $\mathcal{K}(c_0)$. The goal of the subsequent Sections \ref{AH sums section}, \ref{AH sums operators section}, and \ref{ZKC0 section} is to gather all the necessary details from \cite{argyros:haydon:2011} and \cite{zisimopoulou:2014}, and to prove that such $\mathcal{Z}$ exists.

\begin{definition}
\label{abstract BD sum}
Let $(X_n)_{n=1}^\infty$ be a sequence of Banach spaces and $(\Delta_n)_{n=1}^\infty$ be a sequence of disjoint finite sets. Let
\[\mathcal{W} = \big(\oplus_{n=1}^\infty\big(X_n\oplus\ell_\infty(\Delta_n)\big)_\infty\big)_\infty\]
denote the Banach space of all sequences of pairs $(x_n,y_n)_{n=1}^\infty$ such that, for $n\in\N$, $(x_n,y_n)\in X_n\times \ell_\infty(\Delta_n)$, with
\begin{equation*}
\|(x_n,y_n)_{n=1}^\infty\|_\mathcal{W} = \sup_{n\in\N}\max\{\|x_n\|,\|y_n\|\} <\infty.
\end{equation*}
For $n\in\N$, we canonically identify
\[(X_n\oplus\ell_\infty(\Delta_n))_\infty\text{ and }\mathcal{W}_n = \big(\oplus_{k=1}^n\big(X_k\oplus\ell_\infty(\Delta_k)\big)_\infty\big)_\infty\]
with subspaces of $\mathcal{W}$ and let
\[r_n:\mathcal{W}\to \mathcal{W}_n\]
denote the canonical restriction map. For $n\in\N$, let $\pi_{X_n}:\mathcal{W}\to X_n$ denote the canonical quotient map.

Assume that $C\geq1$ and $i_n:\mathcal{W}_n\to\mathcal{W}$, $n\in\N$ are linear operators that satisfy the following.
\begin{enumerate}[label=(\greek*)]
    \item\label{BD assumption a} For all $n\in\N$, $\|i_n\|\leq C$.
    \item\label{BD assumption b} For all $n\in\N$, $r_ni_n:\mathcal{W}_n\to\mathcal{W}_n$ is the identity map.
    \item\label{BD assumption c} For all $m\leq n\in\N$, $i_nr_ni_m = i_m:\mathcal{W}_m\to\mathcal{W}$.
    \item\label{BD assumption d} For all $m<n\in\N$, $\pi_{X_n}i_m:\mathcal{W}_m\to\mathcal{W}$ is the zero map.
\end{enumerate}
For $n\in\N$, define $\mathcal{Z}_n = i_n(X_n\oplus\ell_\infty(\Delta_n))_\infty$ and $\mathcal{Z} = \overline{\langle\cup_{n=1}^\infty\mathcal{Z}_n\rangle}$. The space $\mathcal{Z}$ is called a $C$-Bourgain-Delbaen-$\mathscr{L}_\infty$ sum of $(X_n)_{n=1}^\infty$.
\end{definition}

\begin{proposition}
\label{standard BD decomposition}
For a $C$-Bourgain-Delbaen-$\mathscr{L}_\infty$ sum $\mathcal{Z}$ of a sequence of Banach spaces $(X_n)_{n=1}^\infty$ the following hold.
\begin{enumerate}[label=(\roman*),leftmargin=19pt]

    \item\label{standard BD decomposition i} $(\mathcal{Z}_n)_{n=1}^\infty$ is a Schauder decomposition of $\mathcal{Z}$ with associated projections $P_n = i_nr_n|_{\mathcal{Z}}:\mathcal{Z}\to\mathcal{Z}$, $n\in\N$.

    \item\label{standard BD decomposition ii} For all $n\in\N$, $P_n(\mathcal{Z}) = i_n(\mathcal{W}_n)$.
    
\end{enumerate}

\end{proposition}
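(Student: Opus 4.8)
The plan is to extract everything from the defining relations \ref{BD assumption a}--\ref{BD assumption d} together with the fact that $\mathcal{W}$ and the $\mathcal{W}_n$ are $\max$-sums, so that $r_n$ has norm one and $r_nr_m = r_n$ for $n\leq m$. First I record the routine points: $\|P_n\|\leq C$ by \ref{BD assumption a}, and $i_n$ is an isomorphic embedding with left inverse $r_n$ by \ref{BD assumption b} (in particular $\mathcal{Z}_n$ and $i_n(\mathcal{W}_n)$ are closed). The identity $P_nP_m = P_{\min\{m,n\}}$ on $\mathcal{Z}$ then splits into two cases: for $n\leq m$ one uses that $i_mr_mz$ agrees with $z$ on the first $m$ coordinates by \ref{BD assumption b}, while for $n>m$ one applies \ref{BD assumption c} to $r_mz\in\mathcal{W}_m$. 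I would also note immediately that $P_n|_{\mathcal{Z}_k} = \mathrm{id}$ whenever $k\leq n$ (write $z = i_kv$ and apply \ref{BD assumption c}), and dually that $P_{n-1}|_{\mathcal{Z}_n} = 0$ (for $z = i_nv\in\mathcal{Z}_n$, relation \ref{BD assumption b} forces the first $n-1$ coordinates of $z$ to vanish).

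The one genuine obstacle is showing that $P_n$ maps $\mathcal{Z}$ into $\mathcal{Z}$; since $P_nz = i_n(r_nz)$ with $r_nz\in\mathcal{W}_n$, this amounts to $i_n(\mathcal{W}_n)\subseteq\mathcal{Z}$, which a priori is not obvious because the range of $i_n$ only lies in $\mathcal{W}$. I would prove it by induction on $n$. The base case is $\mathcal{W}_1 = X_1\oplus\ell_\infty(\Delta_1)$, giving $i_1(\mathcal{W}_1) = \mathcal{Z}_1$. For the step, decompose $\mathcal{W}_n = \mathcal{W}_{n-1}\oplus(X_n\oplus\ell_\infty(\Delta_n))$ as an internal direct sum inside $\mathcal{W}_n$; the second summand maps into $\mathcal{Z}_n$ under $i_n$, so it remains to treat $i_nw'$ for $w'\in\mathcal{W}_{n-1}$. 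Relation \ref{BD assumption c} gives $i_nr_ni_{n-1}w' = i_{n-1}w'$, and then \ref{BD assumption b} and \ref{BD assumption d} identify $r_ni_{n-1}w' = w' + \eta$ with $\eta\in X_n\oplus\ell_\infty(\Delta_n)$ having vanishing $X_n$-part (this is exactly where \ref{BD assumption d} enters). Rearranging, $i_nw' = i_{n-1}w' - i_n\eta$, and the right-hand side lies in $\mathcal{Z}$: the first term by the inductive hypothesis, the second because $i_n\eta\in\mathcal{Z}_n$. This closes the induction, so $P_n$ is a norm-$\leq C$ idempotent on $\mathcal{Z}$.

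It is then quick to finish. For \ref{standard BD decomposition ii}: $P_n(\mathcal{Z})\subseteq i_n(\mathcal{W}_n)$ is immediate, and the reverse inclusion follows from $P_n(i_nw) = i_nw$, a consequence of \ref{BD assumption b}. For \ref{standard BD decomposition i}: $P_n\to\mathrm{id}_{\mathcal{Z}}$ in the strong operator topology follows from $P_n|_{\mathcal{Z}_k} = \mathrm{id}$ for $n\geq k$, the density of $\langle\cup_k\mathcal{Z}_k\rangle$ in $\mathcal{Z}$, and $\sup_n\|P_n\|\leq C$, by the standard three-$\varepsilon$ argument. Put $Q_n = P_n - P_{n-1}$ with $P_0 = 0$; the relations $P_nP_m = P_{\min\{m,n\}}$ make the $Q_n$ pairwise orthogonal idempotents with $\sum_{k=1}^nQ_k = P_n$, so every $z\in\mathcal{Z}$ satisfies $z = \lim_nP_nz = \sum_nQ_nz$ with $Q_nz\in Q_n(\mathcal{Z})$. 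Finally I would show $Q_n(\mathcal{Z}) = \mathcal{Z}_n$: the inclusion $\mathcal{Z}_n\subseteq Q_n(\mathcal{Z})$ comes from $P_n|_{\mathcal{Z}_n} = \mathrm{id}$ and $P_{n-1}|_{\mathcal{Z}_n} = 0$; for the reverse, given $z\in Q_n(\mathcal{Z})$ one has $z = P_nz = i_nw$ with $w = r_nz\in\mathcal{W}_n$, and $P_{n-1}z = 0$ together with injectivity of $i_{n-1}$ forces the first $n-1$ coordinates of $z$ to vanish, so $w$ lies in the $n$-th summand and $z = i_nw\in\mathcal{Z}_n$. Uniqueness of the representation $z = \sum z_n$ is the usual application of the $Q_m$'s, and its associated projections are $\sum_{k\leq n}Q_k = P_n$, as claimed.
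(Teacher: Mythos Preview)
Your proof is correct and follows essentially the same approach as the paper: the core of both arguments is the induction showing $i_n(\mathcal{W}_n)\subseteq\mathcal{Z}$ (equivalently $\subseteq P_n(\mathcal{Z})$) via the identity $i_nr_ni_{n-1}=i_{n-1}$ from \ref{BD assumption c}, together with the routine verifications $P_mP_n=P_{\min\{m,n\}}$, $P_n|_{\mathcal{Z}_k}=\mathrm{id}$ for $k\le n$, and $P_{n-1}|_{\mathcal{Z}_n}=0$. Your write-up is if anything a bit more explicit than the paper's (you isolate the well-definedness of $P_n:\mathcal{Z}\to\mathcal{Z}$ as the genuine issue and spell out the $Q_n$ argument), and one small remark: your invocation of \ref{BD assumption d} in the inductive step is harmless but unnecessary, since $i_n\eta\in\mathcal{Z}_n$ holds for any $\eta\in X_n\oplus\ell_\infty(\Delta_n)$ regardless of whether its $X_n$-part vanishes.
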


\begin{proof}
It is clear, by \cref{BD assumption a} of Definition \ref{abstract BD sum}, that $\sup\|P_n\|\leq C$ and, by items \ref{BD assumption b} and \ref{BD assumption c} of Definition \ref{abstract BD sum}, that $P_mP_n = P_{\min\{m,n\}}$. We will show:
\begin{enumerate}[label=($\mathfrak{\alph*}$)]
    \item For $n\leq m\in\N$, $P_m(\mathcal{Z}_n) = \mathcal{Z}_n$.
    \item For $m<n\in\N$, $P_m(\mathcal{Z}_n) = \{0\}$.
\end{enumerate}
\[z = i_n\big((0,0),\ldots,(0,0),(x_n,y_n)\big).\]
These easily yield \cref{standard BD decomposition i}. For the former, by the definition of $\mathcal{Z}_n$, $P_n(\mathcal{Z}_n) = \mathcal{Z}_n$, and thus, for $m\geq n$,
\[P_m(\mathcal{Z}_n) = P_m\big(P_n(\mathcal{Z}_n)\big) = (P_mP_n)(\mathcal{Z}_n) = P_n(\mathcal{Z}_n) = \mathcal{Z}_n.\]
For the latter, let $z\in\mathcal{Z}_n$, i.e, for some $x_n\in X_n$ and $y_n\in\ell_\infty(\Delta_n)$. Then, for $m<n$,
\[
\begin{split}
P_{m}z &= i_{m}r_{m} i_n\big((0,0),\ldots,(0,0),(x_n,y_n)\big)\big)\\
&= i_{m} r_{m}\big((0,0),\ldots,(0,0),(x_n,y_n)\big)\big)\\
&= i_{m}\big((0,0),\ldots,(0,0)\big) = 0.
\end{split}
\]

For the proof of \cref{standard BD decomposition ii}, the inclusion $P_n(\mathcal{Z})\subset i_n(\mathcal{W}_n)$ is obvious. We prove, by induction on $n\in\N$, that $i_n(\mathcal{W}_n)\subset P_n(\mathcal{Z}_n)$. For $n=1$, this is obvious. Let now $n\geq 2$ such that $i_{n-1}(\mathcal{W}_n) \subset P_{n-1}(\mathcal{Z}_n)$ and let
\[z = i_n\big((x_1,y_1),\ldots,(x_{n-1},y_{n-1}),(x_n,y_n)\big)\in i_n(\mathcal{W}_n)\]
Put
\[w = i_{n-1}\big((x_1,y_1),\ldots,(x_{n-1},y_{n-1})\big),\]
which, by the inductive hypothesis, is in $P_{n-1}(\mathcal{Z}) = \mathcal{Z}_1+\cdots+\mathcal{Z}_{n-1}$. By items \ref{BD assumption c} and \ref{BD assumption d} of \Cref{abstract BD sum} we may write
\[w = i_n\big((x_1,y_1),\ldots,(x_{n-1},y_{n-1}),(0,w_n)\big),\]
thus,
\[z - w = i_n\big((0,0),\ldots,(0,0),(x_n,y_n-w_n)\big)\in \mathcal{Z}_n.\]
In conclusion, $z\in \mathcal{Z}_1+\cdots+\mathcal{Z}_{n-1}+\mathcal{Z}_n = P_n(\mathcal{Z})$.
\end{proof}

\begin{proposition}
\label{BD sum finitely fredholm equivalent to c0 sum}
Let $\mathcal{Z}$ be a $C$-Bourgain-Delbaen-$\mathscr{L}_\infty$ sum of a sequence of Banach spaces $(X_n)_{n=1}^\infty$. The following hold.
\begin{enumerate}[label=(\roman*),leftmargin=19pt]
    
    \item\label{BD sum finitely fredholm equivalent to c0 sum-i} $(\mathcal{Z}_n)_{n=1}^\infty$ is $C$-finitely Fredholm equivalent to the standard Schauder decomposition of $(\oplus_{n=1}^\infty X_n)_{c_0}$.

    \item\label{BD sum finitely fredholm equivalent to c0 sum-ii} $\mathcal{Z}$ admits a $C$-equivalent norm such that $(\mathcal{Z}_n)_{n=1}^\infty$ is $1$-finitely Fredholm equivalent to the standard Schauder decomposition of $(\oplus_{n=1}^\infty X_n)_{c_0}$.
    
\end{enumerate}

\end{proposition}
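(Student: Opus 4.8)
The plan is to produce explicitly the operators required by \Cref{finitely-Fredhold-equivalent} and then to obtain \ref{BD sum finitely fredholm equivalent to c0 sum-ii} from \ref{BD sum finitely fredholm equivalent to c0 sum-i} via \Cref{general renorming}. Write $Y = (\oplus_{n=1}^\infty X_n)_0$ with its standard Schauder decomposition and projections $(Q_n)_{n=1}^\infty$, and recall that $\mathcal{Z}$ is a closed subspace of $\mathcal{W}$. Define $L\colon\mathcal{Z}\to Y$ by $Lz = (\pi_{X_n}z)_{n=1}^\infty$, set $L_n = Q_nL$, and define $R_n\colon Y\to\mathcal{Z}$ by $R_n\big((x_k)_{k=1}^\infty\big) = i_n\big((x_1,0),\ldots,(x_n,0)\big)$, where the tuple is regarded as an element of $\mathcal{W}_n$; note $R_n\big((x_k)_k\big)\in i_n(\mathcal{W}_n) = P_n(\mathcal{Z})\subseteq\mathcal{Z}$ by \Cref{standard BD decomposition}\ref{standard BD decomposition ii}. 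First I would record the bookkeeping identities $\pi_{X_k} = \pi_{X_k}r_k$ on $\mathcal{W}$ and $r_m = r_mr_n$ for $m\le n$, and deduce --- using \Cref{standard BD decomposition} together with \ref{BD assumption b} and \ref{BD assumption d} of \Cref{abstract BD sum} --- that every $z = \sum_k z_k\in\mathcal{Z}$ with $z_k\in\mathcal{Z}_k$ satisfies $\pi_{X_k}z = \pi_{X_k}z_k$. Since $\|z_k\|\to 0$, this shows $Lz\in Y$ and $\|L\|\le 1$, hence $\|L_n\|\le 1$, while $\|R_n\|\le C$ by \ref{BD assumption a}; thus the uniform boundedness condition \ref{finitely-Fredhold-equivalent-a} holds with constant at most $C$.

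Next I would check the remaining conditions, noting that most of them hold \emph{exactly}. From $\pi_{X_k}z = \pi_{X_k}z_k$ and \ref{BD assumption d} one sees that $LP_m$ is the truncation $z\mapsto(\pi_{X_1}z,\ldots,\pi_{X_m}z,0,\ldots)$, so $L_nP_m = L_m = Q_mL_n$ for $m\le n$; property \ref{BD assumption c} combined with $r_m = r_mr_n$ gives $P_mR_n = R_m$ for $m\le n$; and evaluating $L$ on the range of $R_n$ gives $L_nR_n = Q_n$. Only two relations require passing to the quotient by the compacts. For $R_nQ_m\sim R_m$: by \ref{BD assumption c}, $i_m(w) = i_nr_ni_m(w)$, and by \ref{BD assumption d} the coordinates $m+1,\ldots,n$ of $r_ni_m(w)$ have vanishing $X_k$-part, so $R_m - R_nQ_m$ equals $i_n$ composed with a linear map with range in the finite-dimensional space $\bigoplus_{k=m+1}^n\ell_\infty(\Delta_k)$, hence is finite rank. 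For $R_nL_n\sim P_n$: one has $R_nL_nz = i_n\big((\pi_{X_1}z,0),\ldots,(\pi_{X_n}z,0)\big)$ whereas $P_nz = i_n(r_nz)$, and $r_nz$ has the same $X_k$-parts $\pi_{X_k}z$ but possibly nonzero $\ell_\infty(\Delta_k)$-parts, so $R_nL_n - P_n$ is again $i_n$ composed with a finite-rank map into $\bigoplus_{k=1}^n\ell_\infty(\Delta_k)$. This verifies \ref{finitely-Fredhold-equivalent-b} and \ref{finitely-Fredhold-equivalent-c} and proves \ref{BD sum finitely fredholm equivalent to c0 sum-i}.

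For \ref{BD sum finitely fredholm equivalent to c0 sum-ii} I would verify the hypotheses of \Cref{general renorming} with $X=\mathcal{Z}$ and the above $Y$: the standard decomposition of $Y$ is monotone, and evaluating $L$ on $R_n\big((x_k)_k\big)$ shows $L_mR_n = Q_{\min\{m,n\}}$ for all $m,n\in\N$. \Cref{general renorming} then yields a $C$-equivalent norm on $\mathcal{Z}$ with respect to which $(\mathcal{Z}_n)_{n=1}^\infty$ is $1$-finitely Fredholm equivalent to the standard decomposition of $Y$.

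The only nontrivial point I anticipate is the compactness of the two error operators $R_m - R_nQ_m$ and $R_nL_n - P_n$: the work lies in tracking, through conditions \ref{BD assumption a}--\ref{BD assumption d}, that every discrepancy between the two sides is confined to the finite-dimensional ``$\ell_\infty(\Delta_k)$-slots'', after which finite-rankness, hence compactness, is automatic. Everything else reduces to elementary manipulations with the restriction maps $r_n$ and the canonical quotients $\pi_{X_n}$.
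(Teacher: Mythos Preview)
Your proposal is correct and follows essentially the same approach as the paper: the operators $R_n$ are identical, your $L_n = Q_nL$ coincide with the paper's $L_n\big((x_k,y_k)_k\big) = (x_1,\ldots,x_n,0,\ldots)$, and the verification of the compatibility and Fredholm invertibility conditions proceeds along the same lines, with the same two finite-rank arguments for $R_nQ_m\sim R_m$ and $R_nL_n\sim P_n$ and the same appeal to \Cref{general renorming} for part \ref{BD sum finitely fredholm equivalent to c0 sum-ii}. The only cosmetic difference is that you first define a global map $L$ and then truncate, which is a slightly cleaner way to package the same computation.
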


\begin{proof}
Denote $Y = (\oplus_{n=1}^\infty X_n)_{c_0}$ and, for $n\in\N$, define the following $L_n:\mathcal{Z}\to Y$ and $R_n:Y\to\mathcal{Z}$.
\begin{gather*}
    L_n\big((x_k,y_k)_{k=1}^\infty\big) = (x_1,x_2,\ldots,x_n,0,0,\ldots)\\
    R_n\big((x_k)_{k=1}^\infty\big) = i_n\big((x_1,0),(x_2,0),\ldots,(x_n,0)\big).
\end{gather*}
and note that $\sup_{n \in \mathbb{N}}\|L_n\|\|R_n\|\leq C$, i.e., the uniform boundedness condition is satisfied. We will also show that the compatibility and initial Fredholm invertibility conditions are satisfied. But first, let us note that $L_mR_n = Q_{\min\{m,n\}}$ holds trivially. Thus, \cref{BD sum finitely fredholm equivalent to c0 sum-ii} will follow immediately from \cref{BD sum finitely fredholm equivalent to c0 sum-i}.

To show the compatibility condition, fix $m\leq n\in\N$. It is obvious that $L_m = Q_mL_n$. To show $L_nP_m = L_m$, let $(x_k,y_k)_{k=1}^\infty\in \mathcal{Z}$ and note 
\[
\begin{split}
P_m\big((x_k,y_k)_{k=1}^\infty\big) &= i_m\big((x_1,y_1),\ldots,(x_m,y_m)\big)\\
&= \big((x_1,y_1),\ldots,(x_m,y_m),(0,w_m+1),(0,w_{m+2}),\ldots\big),
\end{split}
\]
by items \ref{BD assumption b} and \ref{BD assumption d}, of \Cref{abstract BD sum}. This easily yields
\[L_nP_m\big((x_k,y_k)_{k=1}^\infty\big) = L_m\big((x_k,y_k)_{k=1}^\infty\big).\]
To show $R_n = P_mR_n$, let $(x_k)_{k=1}^\infty\in Y$. Then
\[
\begin{split}
P_mR_n\big((x_k)_{k=1}^\infty\big) &= i_mr_m\big(i_n\big((x_1,0),\ldots,(x_m,0),\ldots,(x_n,0)\big)\big)\\
&=i_m\big(\big((x_1,0),\ldots,(x_m,0)\big)\big)\text{ (by \ref{BD assumption b} of \Cref{abstract BD sum})}\\
&=R_m\big((x_k)_{k=1}^\infty\big).
\end{split}
\]
To prove $R_nQ_m\sim R_m$ we define the auxiliary bounded linear operator $A_{m,n}:\mathcal{Z}\to\mathcal{Z}$ such that, for $(x_k,y_k)_{k=1}^\infty\in\mathcal{Z}$.
\[A_{m,n}\big((x_k,y_k)_{k=1}^\infty\big) = i_n\big((0,0),\ldots,(0,0),(0,y_{m+1}),\ldots,(0,y_n)\big),\]
which is finite rank because of factoring through the finite-dimensional space $\ell_\infty(\Delta_{m+1})\oplus\cdots\oplus\ell_\infty(\Delta_n)$. We will show $R_m - A_{m,n}R_m = R_nQ_m$. Indeed, for $(x_k)_{k=1}^\infty\in Y$, note that, by items \ref{BD assumption d} and \ref{BD assumption c} of Definition \ref{abstract BD sum},
\[
\begin{split}
R_m\big((x_k)_{k=1}^\infty\big) &= i_m\big((x_1,0),\ldots,(x_m,0)\big)\\
&= \big((x_1,0),\ldots,(x_m,0),(0,y_{m+1}),(0,y_{m+2}),\ldots\big)\\
&= i_n\big((x_1,0),\ldots,(x_m,0),(0,y_{m+1}),\ldots,(0,y_n)\big).
\end{split}
\]
Therefore,
\[
\begin{split}
\big(R_m - A_{m,n}R_m\big)\big((x_k)_{k=1}^\infty\big) &= i_n\big((x_1,0),\ldots,(x_m,0),(0,0),\ldots,(0,0)\big)\\
&= R_nQ_m\big((x_k)_{k=1}^\infty\big),
\end{split}
\]
and thus, $R_nQ_m\sim R_m$.

We prove the initial Fredholm invertibility condition and fix $n\in\N$. Note that $R_nL_n = Q_n$ is rather obvious and that, for $(x_k,y_k)_{k=1}^\infty\in\mathcal{Z}$,
\[L_nR_n\big((x_k,y_k)_{k=1}^\infty\big) = i_n\big((x_1,0),\ldots,(x_n,0)\big),\]
and thus,
\[\big(P_n - L_nR_n\big)\big((x_k,y_k)_{k=1}^\infty\big) = i_n\big((0,y_1),\ldots,(0,y_n)\big).\]
Therefore $P_n - L_nR_n$ is a finite rank operator because it factors through the finite-dimensional space $\ell_1(\Delta_1)\oplus\cdots\oplus\ell_\infty(\Delta_n)$.
\end{proof}

We formalize the scalar-plus-horizontally approximable property of a Banach space with a Schauder decomposition.

\begin{definition}
A Banach space $X$ with a Schauder decomposition $(X_n)_{n=1}^\infty$ is said to have the {\em scalar-plus-horizontally approximable property} if every bounded linear operator $T$ on $X$ is of the form $\lambda I_X + A$, for some scalar $\lambda$ and $A\in\mathcal{HA}(X)$, where $I_X:X\to X$ is the identity operator.
\end{definition}

Note that if $X$ has the scalar-plus-horizontally approximable property, then the Schauder decomposition $(X_n)_{n=1}^\infty$ is shrinking.

The following is the culmination of Sections \ref{AH sums section}, \ref{AH sums operators section}, and \ref{ZKC0 section}. Specifically, its statement is a combination of \Cref{XAH has scalar-plus-compact} and \Cref{ZKC0 has scalar-plus-horizontally approximable}.

\begin{theorem}
\label{AH sum of AH}
There exists a Banach space $\mathfrak{X}_\mathrm{AH}$ with the scalar-plus-compact property and a 2-Bourgain-Delbaen-$\mathscr{L}_\infty$-sum $\mathcal{Z}_{\mathcal{K}(c_0)}$ of countably many copies of $\mathfrak{X}_\mathrm{AH}$ that has the scalar-plus-horizontally approximable property.
\end{theorem}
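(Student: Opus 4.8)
The plan is to realize $\mathcal{Z}_{\mathcal{K}(c_0)}$ through a single Argyros--Haydon--type Bourgain--Delbaen recursion whose parameters are split across two disjoint index sets. Start from a sequence of pairs $(m_j,n_j)_{j=1}^\infty$ obeying the growth conditions of \cite{argyros:haydon:2011} (roughly $m_1\geq 4$, $m_{j+1}\geq m_j^2$, and $n_{j+1}$ chosen enormous relative to $m_{j+1}$ and all earlier data), and fix disjoint infinite sets $L_1,L_2$ with $L_1\cup L_2=\N$. First I would feed the thinned subsequence $(m_j,n_j)_{j\in L_1}$ into the Argyros--Haydon machine verbatim; since the growth conditions are only relaxed by passing to a subset, the construction produces an $\mathscr{L}_\infty$-space $\mathfrak{X}_\mathrm{AH}$ with its Bourgain--Delbaen coordinate system $(\Gamma,(d_\gamma))$, and all of the standard auxiliary results carry over: the $(d_\gamma)$ form a shrinking FDD, rapidly increasing sequences exist, and the basic inequality holds. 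The scalar-plus-compact property of $\mathfrak{X}_\mathrm{AH}$ (our \Cref{XAH has scalar-plus-compact}) is then exactly the Argyros--Haydon theorem for this parameter choice: one approximates $T\in\mathcal{L}(\mathfrak{X}_\mathrm{AH})$ by its restrictions to tail-segments of the FDD, the HI-type localization forces the ``diagonal'' part to be scalar, and the remainder is compact.

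Second, I would build $\mathcal{Z}_{\mathcal{K}(c_0)}$ as a Bourgain--Delbaen-$\mathscr{L}_\infty$ sum of countably many copies $X_n$ of $\mathfrak{X}_\mathrm{AH}$, in the precise sense of \Cref{abstract BD sum}, following the template of \cite{zisimopoulou:2014}: the new ``external'' evaluation functionals attached to the finite sets $\Delta_n$ are taken to be Argyros--Haydon functionals of weight $m_j^{-1}$ with $j\in L_2$, with their Schreier-admissibility bookkeeping governed by the $n_j$, $j\in L_2$, relative to the global FDD obtained by concatenating the internal FDDs of the $X_n$'s. The extension maps $i_n:\mathcal{W}_n\to\mathcal{W}$ are defined by the usual recursion, and one verifies conditions \ref{BD assumption a}--\ref{BD assumption d} of \Cref{abstract BD sum} directly from that recursion; the bound $\|i_n\|\leq 2$ (hence $C=2$) follows from the standard Bourgain--Delbaen estimate once the external weights $m_j^{-1}$, $j\in L_2$, are summably small. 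Once this is in place, \Cref{BD sum finitely fredholm equivalent to c0 sum} applies automatically and gives that $(\mathcal{Z}_n)_{n=1}^\infty$ is $2$-finitely Fredholm equivalent to the standard decomposition of $(\oplus_n X_n)_0=c_0(\mathfrak{X}_\mathrm{AH})$, so the only remaining task for \Cref{AH sum of AH} is to establish the scalar-plus-horizontally-approximable property of $\mathcal{Z}_{\mathcal{K}(c_0)}$.

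That last property (our \Cref{ZKC0 has scalar-plus-horizontally approximable}), namely that every $T\in\mathcal{L}(\mathcal{Z}_{\mathcal{K}(c_0)})$ equals $\lambda I + A$ with $A\in\mathcal{HA}(\mathcal{Z}_{\mathcal{K}(c_0)})$, is where I expect the real work to be. I would run a two-level Argyros--Haydon argument. At the ``external'' level, using rapidly increasing sequences built across the blocks $X_n$ with weights drawn from $L_2$ and the corresponding basic inequality, one shows that $T$ is, modulo a horizontally approximable operator, asymptotically ``$c_0$-diagonal'' for $(\mathcal{Z}_n)_{n=1}^\infty$: the off-diagonal compressions $P_m T P_n$ ($m\neq n$) become negligible and the block compressions $P_{\{n\}}TP_{\{n\}}$ stabilize. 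At the ``internal'' level, the scalar-plus-compact property of each $X_n\cong\mathfrak{X}_\mathrm{AH}$ forces $P_{\{n\}}TP_{\{n\}}=\lambda_n I_{X_n}+K_n$ with $K_n$ compact; a further boundedness argument (an operator on a $c_0$-sum with non-vanishing distinct diagonal scalars would be unbounded, via \Cref{BD sum finitely fredholm equivalent to c0 sum}) gives $\lambda_n\to\lambda$ and $\|K_n\|\to 0$, so the diagonal part of $T$ is $\lambda I$ plus a horizontally approximable operator. The place where the disjointness of $L_1$ and $L_2$ is essential is precisely in keeping these two levels from interfering: a vector in a single $X_n$ whose internal analysis uses weights from $L_1$ cannot be ``seen'' by an external functional of weight $m_j^{-1}$, $j\in L_2$, as having comparable external weight, so no HI-style functional can couple different copies $X_n$ or couple the internal and external norming structures, and this rigidity is what makes the two-level decomposition of $T$ go through. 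Assembling the pieces yields $T-\lambda I\in\mathcal{HA}(\mathcal{Z}_{\mathcal{K}(c_0)})$, which together with \Cref{XAH has scalar-plus-compact} is exactly the content of \Cref{AH sum of AH}.
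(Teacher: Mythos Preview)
Your construction matches the paper: $\mathfrak{X}_\mathrm{AH}$ and $\mathcal{Z}_{\mathcal{K}(c_0)}$ are built exactly as you describe, with disjoint parameter sets $L_1,L_2$, and your reading of why disjointness matters is correct. But your route to the scalar-plus-horizontally-approximable property diverges from the paper's and has genuine gaps. The paper never inspects diagonal blocks $P_{\{n\}}TP_{\{n\}}$ or extracts scalars $\lambda_n$. Its engine is a single incomparability lemma (\Cref{main incomparability result}): for Argyros--Haydon sums $\mathcal{Z}^{(1)},\mathcal{Z}^{(2)}$ whose parameter sets have infinite symmetric difference, any $T:\mathcal{Z}^{(1)}\to\mathcal{Z}^{(2)}$ with each $P_{\{n\}}^{(2)}T$ compact is automatically horizontally compact, and symmetrically any $T$ with each $TP_{\{n\}}^{(1)}$ compact satisfies $\lim_n\|T-P_n^{(2)}T\|=0$. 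Because $\mathfrak{X}_\mathrm{AH}$ has a finite-dimensional FDD, the first half shows every map $\mathcal{Z}_{\mathcal{K}(c_0)}\to\mathfrak{X}_\mathrm{AH}$ is horizontally compact, so Zisimopoulou's black box (\Cref{scalar-plus-horizontally compact zisimopoulou}) immediately yields the scalar-plus-horizontally-\emph{compact} property of $\mathcal{Z}_{\mathcal{K}(c_0)}$, producing $\lambda$ with no diagonal analysis. The upgrade from horizontally compact to horizontally approximable is then the second half of the same lemma, applied to maps $\mathfrak{X}_\mathrm{AH}\to\mathcal{Z}_{\mathcal{K}(c_0)}$.

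Your direct two-level plan has two concrete holes. First, an external RIS/basic-inequality argument run inside $\mathcal{Z}_{\mathcal{K}(c_0)}$ delivers at best horizontal compactness of $T-\lambda I$, i.e.\ $\lim_n\|(T-\lambda I)(I-P_n)\|=0$; it does not by itself control $\|(I-P_m)(T-\lambda I)\|$, which is the other half of horizontal approximability and is exactly where the paper must invoke the incomparability lemma in the reverse direction $\mathfrak{X}_\mathrm{AH}\to\mathcal{Z}_{\mathcal{K}(c_0)}$. Second, your justification that $\lambda_n\to\lambda$ and $\|K_n\|\to 0$ does not hold up: the boundedness heuristic you cite is already false on $c_0(X)$ (a diagonal operator with bounded, non-convergent $(\lambda_n)$ is perfectly bounded there), and even on the AH sum it says nothing about the compact remainders $K_n$. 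The missing ingredient in your outline is precisely the statement that operators \emph{between} Argyros--Haydon sums with essentially disjoint parameters are forced to be small in both directions; the paper isolates this as \Cref{main incomparability result} and uses it twice rather than attempting any block-by-block analysis of $T$.
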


The following is the main result of this paper. Given the existence of the space from \Cref{AH sum of AH}, its proof is a direct consequence of the tools and language developped so far.

\begin{theorem}
The Calkin algebra of $\mathcal{Z}_{\mathcal{K}(c_0)}$ is isomorphic to the unitization of $\mathcal{K}(c_0)$ as a Banach algebra. More precisely, after passing to a $2$-equivalent norm of $\mathcal{Z}_{\mathcal{K}(c_0)}$, $\mathpzc{Cal}(\mathcal{Z}_{\mathcal{K}(c_0)})$ contains a two-sided ideal of codimension one that is isometrically isomorphic to $\mathcal{K}(c_0)$ as a Banach algebra.
\end{theorem}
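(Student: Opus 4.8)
The plan is to assemble the final theorem from the pieces already in place, treating it as a bookkeeping exercise once the hard analytic input (\Cref{AH sum of AH}) is granted. First I would invoke \Cref{AH sum of AH} to obtain the Argyros–Haydon space $\mathfrak{X}_\mathrm{AH}$ with the scalar-plus-compact property and the $2$-Bourgain-Delbaen-$\mathscr{L}_\infty$-sum $\mathcal{Z}_{\mathcal{K}(c_0)}$ of countably many copies of $\mathfrak{X}_\mathrm{AH}$ having the scalar-plus-horizontally approximable property. Since that property forces the standard Schauder decomposition $(\mathcal{Z}_n)_{n=1}^\infty$ of $\mathcal{Z}_{\mathcal{K}(c_0)}$ to be shrinking (by the remark after the definition of that property), every bounded operator on $\mathcal{Z}_{\mathcal{K}(c_0)}$ splits as $\lambda I + A$ with $A\in\mathcal{HA}(\mathcal{Z}_{\mathcal{K}(c_0)})$, and consequently the quotient map $\mathcal{Q}:\mathcal{L}(\mathcal{Z}_{\mathcal{K}(c_0)})\to\mathpzc{Cal}(\mathcal{Z}_{\mathcal{K}(c_0)})$ restricts to a surjection from $\{\lambda I+A:\lambda\in\R,\ A\in\mathcal{HA}\}$ onto the whole Calkin algebra. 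Hence $\mathpzc{Cal}(\mathcal{Z}_{\mathcal{K}(c_0)}) = \R\,\mathcal{Q}(I) + \mathcal{HA}(\mathcal{Z}_{\mathcal{K}(c_0)})/\mathcal{K}(\mathcal{Z}_{\mathcal{K}(c_0)})$, and since $I\notin\mathcal{HA}(\mathcal{Z}_{\mathcal{K}(c_0)})$ (the identity is not norm-approximable by its truncations $P_nIP_n=P_n$ on an infinite-dimensional space with an infinite decomposition), the subalgebra $\mathcal{HA}(\mathcal{Z}_{\mathcal{K}(c_0)})/\mathcal{K}(\mathcal{Z}_{\mathcal{K}(c_0)})$ has codimension exactly one. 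It is a two-sided ideal because $\mathcal{HA}$ is a two-sided ideal in $\R I + \mathcal{HA}$: for $A\in\mathcal{HA}$ and $\lambda I + B$ with $B\in\mathcal{HA}$ we have $(\lambda I+B)A = \lambda A + BA\in\mathcal{HA}$ since $\mathcal{HA}$ is an algebra, and symmetrically on the other side.

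Next I would identify this codimension-one ideal with $\mathcal{K}(c_0)$ up to isomorphism. By \Cref{BD sum finitely fredholm equivalent to c0 sum}\ref{BD sum finitely fredholm equivalent to c0 sum-i}, the standard Schauder decomposition $(\mathcal{Z}_n)_{n=1}^\infty$ is $2$-finitely Fredholm equivalent to the standard Schauder decomposition of $(\oplus_{n=1}^\infty X_n)_0 = c_0(\mathfrak{X}_\mathrm{AH})$; both decompositions are shrinking (the latter by the remark on $(\oplus_{n=1}^\infty X_n)_0$, the former as noted above). \Cref{hor-app-by-compact isomorphism} then gives that $\mathcal{HA}(\mathcal{Z}_{\mathcal{K}(c_0)})/\mathcal{K}(\mathcal{Z}_{\mathcal{K}(c_0)})$ and $\mathcal{HA}(c_0(\mathfrak{X}_\mathrm{AH}))/\mathcal{K}(c_0(\mathfrak{X}_\mathrm{AH}))$ are $4$-isomorphic as Banach algebras. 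Since $\mathfrak{X}_\mathrm{AH}$ has the scalar-plus-compact property, \Cref{horizontally approximable by compact in c0 of scalar-plus-compact} identifies the latter isometrically as a Banach algebra with $\mathcal{K}(c_0)$. Composing, the codimension-one two-sided ideal $\mathcal{HA}(\mathcal{Z}_{\mathcal{K}(c_0)})/\mathcal{K}(\mathcal{Z}_{\mathcal{K}(c_0)})$ of $\mathpzc{Cal}(\mathcal{Z}_{\mathcal{K}(c_0)})$ is isomorphic to $\mathcal{K}(c_0)$, and the whole Calkin algebra is therefore isomorphic to the unitization of $\mathcal{K}(c_0)$ (adjoining the class of $I$ as the unit). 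This proves the first sentence of the theorem.

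For the refined ``isometric'' statement I would use \Cref{BD sum finitely fredholm equivalent to c0 sum}\ref{BD sum finitely fredholm equivalent to c0 sum-ii}: after renorming $\mathcal{Z}_{\mathcal{K}(c_0)}$ with a $2$-equivalent norm, its standard Schauder decomposition becomes $1$-finitely Fredholm equivalent to that of $c_0(\mathfrak{X}_\mathrm{AH})$. Running \Cref{hor-app-by-compact isomorphism} with $C=1$ now yields that $\mathcal{HA}(\mathcal{Z}_{\mathcal{K}(c_0)})/\mathcal{K}(\mathcal{Z}_{\mathcal{K}(c_0)})$ is $1$-isomorphic, i.e. isometrically isomorphic, to $\mathcal{HA}(c_0(\mathfrak{X}_\mathrm{AH}))/\mathcal{K}(c_0(\mathfrak{X}_\mathrm{AH}))\cong\mathcal{K}(c_0)$; renorming does not affect which operators are compact or horizontally approximable, nor the codimension-one, two-sided-ideal structure established above. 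One caveat to check here is that the map $\widehat\Phi$ built in the proof of \Cref{hor-app-by-compact isomorphism} is genuinely norm-preserving when $C=1$, not merely norm-at-most-one with norm-at-most-one inverse; since $\widehat\Psi\widehat\Phi=\mathrm{id}$ and both have norm $\le 1$, $\widehat\Phi$ is an isometry automatically, so this is immediate. Assembling these, $\mathpzc{Cal}(\mathcal{Z}_{\mathcal{K}(c_0)})$, computed in the renormed space, contains $\mathcal{HA}(\mathcal{Z}_{\mathcal{K}(c_0)})/\mathcal{K}(\mathcal{Z}_{\mathcal{K}(c_0)})$ as a two-sided ideal of codimension one that is isometrically isomorphic to $\mathcal{K}(c_0)$, which is exactly the assertion.

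The only genuine obstacle in this argument is \Cref{AH sum of AH} itself — producing a single Argyros–Haydon space and an Argyros–Haydon sum of countably many of its copies such that the sum simultaneously has the scalar-plus-compact behaviour internally (each summand) and the scalar-plus-horizontally-approximable behaviour globally, with the two Argyros–Haydon parameter families $(m_j,n_j)_{j\in L_1}$ and $(m_j,n_j)_{j\in L_2}$ drawn from disjoint index sets so that the external rapidly-increasing-sequence arguments and the internal ones do not interfere. Everything above is downstream of that theorem; given it, the final statement is a routine assembly of \Cref{horizontally approximable by compact in c0 of scalar-plus-compact}, \Cref{hor-app-by-compact isomorphism}, and \Cref{BD sum finitely fredholm equivalent to c0 sum}.
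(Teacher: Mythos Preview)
Your proof is correct and follows essentially the same route as the paper: invoke \Cref{AH sum of AH}, use the scalar-plus-horizontally approximable property to see that $\mathcal{HA}(\mathcal{Z}_{\mathcal{K}(c_0)})/\mathcal{K}(\mathcal{Z}_{\mathcal{K}(c_0)})$ is a two-sided ideal of codimension one in the Calkin algebra, and then combine \Cref{BD sum finitely fredholm equivalent to c0 sum}, \Cref{hor-app-by-compact isomorphism}, and \Cref{horizontally approximable by compact in c0 of scalar-plus-compact} to identify this ideal with $\mathcal{K}(c_0)$. Your write-up is in fact more explicit than the paper's (you spell out why $I\notin\mathcal{HA}$, why the ideal is two-sided, and why $C=1$ in \Cref{hor-app-by-compact isomorphism} forces an isometry); the only cosmetic slip is that the scalar field throughout the paper is $\C$, not $\R$.
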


\begin{proof}
Because $\mathcal{Z}_{\mathcal{K}(c_0)}$ has the scalar-plus-horizontally approximable property, $\mathcal{HA}(\mathcal{Z}_{\mathcal{K}(c_0)})/\mathcal{K}(\mathcal{Z}_{\mathcal{K}(c_0)})$ is a two-sided ideal of codimension one in the space $\mathpzc{Cal}(\mathcal{Z}_{\mathcal{K}(c_0)})$. By \Cref{BD sum finitely fredholm equivalent to c0 sum} and  \Cref{hor-app-by-compact isomorphism}, after passing to a 2-equivalent norm of $\mathcal{Z}_{\mathcal{K}(c_0)}$,  $\mathcal{HA}(\mathcal{Z}_{\mathcal{K}(c_0)})/\mathcal{K}(\mathcal{Z}_{\mathcal{K}(c_0)})$ is isometrically isomorphic to $\mathcal{HA}(c_0(X))/\mathcal{K}(c_0(X))$ which, by \Cref{horizontally approximable by compact in c0 of scalar-plus-compact}, is isometrically isomorphic to $\mathcal{K}(c_0)$.
\end{proof}

\begin{remark}
The above theorem does not state that, after passing to an equivalent norm, the Calkin algebra of $\mathcal{Z}_{\mathcal{K}(c_0)}$ coincides isometrically with the natural unitization of $\mathcal{K}(c_0)$ inside $\mathcal{L}(c_0)$, i.e.,  $\C I_{c_0}\oplus\mathcal{K}(c_0)$.
\end{remark}

\section{Argyros-Haydon sums of a sequence of Banach spaces}
\label{AH sums section}
In this section we repeat the definition of an Argyros-Haydon sum of a sequence of Banach space $(X_n)_{n=1}^\infty$ from \cite[Sections 4 and 5]{zisimopoulou:2014}. Although the presented definition is essentially the same, there are some mild, mostly notational, differences that we briefly elaborate on in \Cref{skip norming functionals} and in the paragraph before \Cref{scalar-plus-horizontally compact zisimopoulou}. Specifically, instead of considering a sequence of Banach spaces $X_n$, $n\in\N$, each with a norming sequence $(f_i^n)_{i=1}^\infty$, we consider a sequence of closed subspaces $(X_n)_{n=1}^\infty$ of $\ell_\infty$, each with the standard norming sequence of the coordinate functionals on $\ell_\infty$.

Let $(m_j)_{j=1}^\infty$ and $(n_j)_{j=1}^\infty$ be increasing sequences of positive integers.
\begin{assumption}
\label{lacunary assumption}
We assume that $(m_j)_{j=1}^\infty$, $(n_j)_{j=1}^\infty$ satisfy the following:
\begin{enumerate}[label=(\greek*)]
    
    \item $m_1\geq 4$.

    \item $m_{j+1}\geq m_j^2$.

    \item $n_1\geq m_1^2$.

    \item $n_{j+1}\geq (16n_j)^{\log_2(m_{j+1})}$.

\end{enumerate}
\end{assumption}
The sequence of pairs $(m_j,n_j)_{j=1}^\infty$ will be referred to as a set of parameters because it is used to define a sequence of disjoint finite sets $(\Delta_n)_{n=1}^\infty$ that depend on it. This definition also includes additional information encoded into a finite collection of partial functions from $\Gamma = \cup_{n=1}^\infty\Delta_n$ to some sets. All this information is then used to define, for a sequence $(X_n)_{n=1}^\infty$ of closed subspaces of $\ell_\infty$, a specific Bourgain-Delbaen-$\mathscr{L}_\infty$-sum
\[\mathcal{Z} = \mathcal{Z}\big((m_j,n_j)_{j=1}^\infty,(X_n)_{n=1}^\infty\big)\subset \mathcal{W} = \big(\oplus_{n=1}^\infty(X_n\oplus\ell_\infty(\Delta_n))_\infty\big)_\infty\]
of $(X_n)_{n=1}^\infty$. We will refer to such a space $\mathcal{Z}$ as an Argyros-Haydon sum of $(X_n)_{n=1}^\infty$ with parameters $(m_j,n_j)_{j=1}^\infty$.

\subsection{Definition of the sets $(\Delta_n)_{n=1}^\infty$}
We construct disjoint finite sets $\Delta_n$, $n\in\N$, depending on the pair of sequences $(m_j)_{j=1}^\infty$ and $(n_j)_{j=1}^\infty$ alongside a finite collection of partial functions from $\Gamma = \cup_{n=1}^\infty\Delta_n$ to some sets.

Let $\Delta_1^0 = \emptyset$, $\Delta_1^1 = \{(1)\}$, and $\Delta_1 = \Delta_1^0\cup\Delta_1^1$. For $\gamma = (1)$ we let $\mathrm{rank}(\gamma) = 1$ and $\sigma(\gamma) = 2$. Assume we have defined disjoint sets $\Delta_1,\ldots,\Delta_n$ such that, for $1\leq k\leq n$, $\Delta_k$ is the disjoint union of two sets $\Delta_k^0$ and $\Delta_k^1$. Denote $\Gamma_n = \cup_{k=1}^n\Delta_n$ and $\Gamma^1_n = \cup_{k=1}^n\Delta^1_n$. Assume that, for $\gamma\in\Gamma_1^1$ we have defined $\mathrm{rank}(\gamma)$, $\mathrm{cut}(\gamma)$, and $\mathrm{age}(\gamma)$ in $\{1,\ldots,n\}$, $\mathrm{weight}(\gamma)\in\{m_1^{-1},\ldots,m_{2(n-1)}^{-1}\}$, and $\mathrm{base}(\gamma)$, $\mathrm{top}(\gamma)$ in sets that will be specified in the inductive step. Assume we have also defined an injection $\sigma:\Gamma^1_n\to\N$ such that, for $\gamma\in\Gamma_n^1$, $\sigma(\gamma)>\mathrm{rank}(\gamma)$.

We fix $N(n)\in\N$ sufficiently large, to be determined soon. Define
\begin{align*}
G_n &= \big\{pe^{i2\pi q}:p,q\in[0,1]\cap\mathbb{Q},\;p(N(n)!)\in\Z,\text{ and }q(N(n)!)\in\Z\big\}\\
K_n& = \Big\{f = \big((f_k(i))_{i=1}^n\big)_{k=1}^n\in\prod_{k=1}^nG_n^n:\sum_{k=1}^n\sum_{i=1}^n|f_k(i)|\leq 1\Big\},\text{ and}\\
B_{n} &= \Big\{b^* = \big((b_k^*(\gamma))_{\gamma\in\Delta_k}\big)_{k=1}^n\in\prod_{k=1}^nG_n^{\Delta_k}:\sum_{k=1}^n\sum_{\gamma\in\Delta_k}|b_k^*(\gamma)| \leq 1\Big\}.
\end{align*}
For $1\leq k\leq n$ and $\xi\in\Delta_k$, we let $e_\xi^*$ denote the member $\big((b_k^*(\gamma))_{\gamma\in\Delta_k}\big)_{k=1}^n$ of $B_n$ such that $b^*_k(\xi) = 1$ and it has he value zero in all other entries.

Note that $G_n$ is a $4/(N(n)!)$-net of the complex unit disk, and therefore, $B_{n}$ is a $(7|\Gamma_n|)/(N(n)!)$-net of the unit ball of $(\oplus_{r=1}^n\ell_1(\Delta_r))_1$. We demand that $N(n)$ has been chosen sufficiently large such that $B_{n}$ is a $1/2^n$-net in the unit ball of $(\oplus_{k=1}^n\ell_1(\Delta_k))_1$. Similarly, we demand that $K_n$ is a $1/2^n$ net in the unit ball of $(\oplus_{k=1}^n\ell_1^n)_1$. We also demand $N(n)\geq N(n-1)$, and thus $G_n\supset G_{n-1}$, (which is the sole purpose of the factorial).

We define
\[\Delta_{n+1}^0 = \{n+1\}\times K_n.\]
For $\gamma\in\Delta_{n+1}^0$, we use the following notation.
\begin{enumerate}[label=(\arabic*)]

\item Let $\mathrm{rank}(\gamma) = n+1$ denote the first coordinate of $\gamma$.

\item Let $\mathrm{top}(\gamma) = f\in K_n$ denote the second coordinate of $\gamma$.

\end{enumerate}

To define $\Delta_{n+1}^1$, we first consider the set
\[\Delta_{n+1}^{1,\mathrm{aux}} = \{n+1\}\times \big(\{\emptyset\}\cup\Gamma^1_n\setminus\Gamma_1^1\big)\times\{m^{-1}_1,\ldots,m^{-1}_{2n}\}\times \{1,\dots,n-1\}\times B_n.\]
For $\gamma\in\Delta_{n+1}^{1,\mathrm{aux}}$, we use the following notation.
\begin{enumerate}[label=(\arabic*),resume]
    
    \item Let $\mathrm{rank}(\gamma) = n+1$ denote the first coordinate of $\gamma$.
    
    \item Let $\mathrm{base}(\gamma)$ denote the second coordinate of $\gamma$ and note
    \begin{itemize}
        \item either $\mathrm{base}(\gamma)=\xi\in\Gamma^1_n\setminus\Gamma_1^1$
        \item or $\mathrm{base}(\gamma) = \emptyset$, in which case we say it is undefined.
    \end{itemize}   
    
    \item Let $\mathrm{weight}(\gamma)\in\{m^{-1}_1,\ldots,m^{-1}_{2n}\}$ denote the third coordinate of $\gamma$.

    \item Let $\mathrm{cut}(\gamma) = p\in\{1\,\ldots,n-1\}$ denote the fourth coordinate of $\gamma$.

    \item Let $\mathrm{top}(\gamma) = b^*\in B_n$ denote the last coordinate of $\gamma$.

    \item Let $\mathrm{age}(\gamma)$ be defined as follows.
    \begin{itemize}
        \item If $\mathrm{base}(\gamma)$ is undefined let $\mathrm{age}(\gamma) = 1$.
        \item If $\mathrm{base}(\gamma) = \xi$ let $\mathrm{age}(\gamma) =\mathrm{age}(\xi)+ 1$.
    \end{itemize}        
\end{enumerate}

We define $\Delta_{n+1}^{1,\mathrm{even}}$ as all $\gamma\in\Delta_{n+1}^{1,\mathrm{aux}}$ such that, for some $1\leq j\leq n$, $\mathrm{weight}(\gamma) = m_{2j}^{-1}$ and
\begin{enumerate}[label=(\greek*),leftmargin=19pt]
    \item\label{even no base} either $\mathrm{base}(\gamma)$ is undefined or,
    \item\label{even with base} if $\mathrm{base}(\gamma) = \xi\in\Gamma_n^1\setminus\Delta_1^1$, then $\mathrm{weight}(\xi) = m_{2j}^{-1}$ and $\mathrm{age}(\gamma)< n_{2j}$.
\end{enumerate}
We define $\Delta_{n+1}^{1,\mathrm{odd}}$ as all $\gamma\in\Delta_{n+1}^\mathrm{aux}$ such that, for some $1\leq j\leq n$, $\mathrm{weight}(\gamma) = m_{2j-1}^{-1}$ and
\begin{enumerate}[label=(\greek*),leftmargin=19pt,resume]
    \item\label{odd no base} either $\mathrm{base}(\gamma)$ is undefined and $\mathrm{top}(\gamma) = e_\eta^*$, for some $\eta\in\Gamma^1$ satisfying $\mathrm{weight}(\eta) = m^{-1}_{4i-2} < m_{2j-1}^{-2}$ or,
    \item\label{odd with base} if $\mathrm{base}(\gamma) = \xi\in\Gamma_n^1\setminus\Gamma_1^1$, then $\mathrm{weight}(\xi) = m_{2j-1}^{-1}$ and $\mathrm{age}(\gamma)< n_{2j}$, and $\mathrm{top}(\gamma) = e_\eta^*$, for some $\eta\in\Gamma^1$ satisfying $\mathrm{weight}(\eta) = m_{4\sigma(\xi)}$.
\end{enumerate}
Define $\Delta_{n+1}^1 = \Delta_{n+1}^{1,\mathrm{even}}\cup \Delta_{n+1}^{1,\mathrm{odd}}$, $\Delta_{n+1} = \Delta_{n+1}^0\cup\Delta_{n+1}^1$ and, letting $\Gamma_{n+1}^1 = \cup_{k=1}^{n+1}\Gamma_k^1$, extend $\sigma$ to an injective function $\sigma:\Gamma^1_{n+1}\to \N$ such that, for $\gamma\in\Gamma_{n+1}^1$, $\sigma(\gamma)>\mathrm{rank}(\gamma)$.

We let $\Gamma = \cup_{k=1}^\infty\Delta_k$ and note the dependence on $(m_j)_{j=1}^\infty$ and $(n_j)_{j=1}^\infty$. We also denote $\Gamma^0 = \cup_{k=1}^\infty\Delta_k^0$ and $\Gamma^1 = \cup_{k=1}^\infty\Delta_k^1$.

\begin{remark}
\label{skip norming functionals}
In \cite{zisimopoulou:2014}, the set $\Gamma$ does not only depend on $(m_j,n_j)_{j=1}^\infty$, but also on a sequence of Banach spaces $(X_n)_{n=1}^\infty$ and it incorporates in its Definition some information about them, via some norming functionals on them. We have replaced this with the sets $K_n$, $n\in\N$, which encode the usual action of absolute convex combinations of the usual norming functionals $(e_i^*)_{i=1}^\infty$ on $\ell_\infty$, and thus on all its subspaces.
\end{remark}

\subsection{Definition of the space $\mathcal{Z} = \mathcal{Z}\big((m_j,n_j)_{j=1}^\infty,(X_n)_{n=1}^\infty\big)$}

For $n\in\N$, let $X_n$ be a closed subspace of $\ell_\infty$. Denote
\[\mathcal{W} = \big(\oplus_{n=1}^\infty\big(X_n\oplus\ell_\infty(\Delta_n)\big)_\infty\big)_\infty.\]
We will inductively define linear extension maps $i_{n,n+1}:\mathcal{W}_n\to\mathcal{W}_{n+1}$ (i.e., $r_n i_{n,n+1}:\mathcal{W}_{n}\to \mathcal{W}_{n}$ is the identity map), such that additionally $\pi_{X_{n+1}} i_{n,n+1} = 0$. Assuming that we have defined $i_{1,2}$,\ldots,$i_{n-1,n}$, we will define $i_{n,n+1}:\mathcal{W}_n\to\mathcal{W}_{n+1}$ by first specifying  linear functionals $c_\gamma^*:\mathcal{W}\to\C$, $\gamma\in\Delta_{n+1}$, and, for $w = \big((x_1,y_1),\ldots,(x_n,y_n)\big)\in\mathcal{W}_n\subset\mathcal{W}$ put
\[i_{n,n+1}(w) = \Big((x_1,y_1),\ldots,(x_n,y_n),\big((0,(c_\gamma^*(w))_{\gamma\in\Delta_{n+1}}\big)\Big).\]
Denote, for $m<n$, $i_{m,n} = i_{n-1,n}\circ i_{n-2,n-1}\circ\cdots\circ i_{m,m+1}:\mathcal{W}_m\to\mathcal{W}_n$ and $i_{n,n}:\mathcal{W}_n\to\mathcal{W}_n$ the identity map. We canonically identify each $b^* = ((b_k^*(\gamma))_{\gamma\in\Delta_k})_{k=1}^n$ in $B_n$ with with a linear functional $b^*:\mathcal{W}\to\C$ given by
\begin{equation}
\label{delta-one tops}
b^*\big((x_k,y_k)_{k=1}^\infty\big) = \sum_{k=1}^n\langle b_k^*,y_k\rangle
\end{equation}
and each $f = \big((f_k(i))_{i=1}^n\big)_{k=1}^n\in K_n$, is identified with a linear functional $f:\mathcal{W}\to\C$ given by
\begin{equation}
\label{delta-zero tops}
f\big((x_k,y_k)_{k=1}^\infty\big) = \sum_{k=1}^n\sum_{i=1}^nf_k(i)e_i^*(x_k),
\end{equation}
where $(e_i^*)_{i=1}^\infty$ are the standard coordinate functionals of $\ell_\infty$.

 Let $\gamma\in\Delta_{n+1}$.
 \begin{enumerate}[label=(\roman*),leftmargin=21pt]
    
     \item If $\gamma\in\Delta_{n+1}^0$ with $\mathrm{top}(\gamma) = f\in K_n$, using the identification \eqref{delta-zero tops}, define
     \[c_\gamma^* = f.\]
     
     \item If $\gamma\in\Delta_{n+1}^1$ is as in \ref{even no base} or \ref{odd no base}, with $\mathrm{weight}(\gamma) = m_j^{-1}$, $\mathrm{cut}(\gamma) = p\in\{1,\ldots,n-1\}$, and $\mathrm{top}(\gamma) = b^*\in B_n$, using \eqref{delta-one tops}, define
     \[c_\gamma^* = \frac{1}{m_j}\big(b^*-b^*i_{p,n}r_p\big).\]

     \item If $\gamma\in\Delta_{n+1}^1$ is as in \ref{even with base} or \ref{odd with base}, with $\mathrm{base}(\gamma) = \xi\in\Gamma_n^1$, $\mathrm{weight}(\gamma) = m_j^{-1}$, $\mathrm{cut}(\gamma) = p\in\{1,\ldots,n-1\}$, and $\mathrm{top}(\gamma) = b^*\in B_n$, using \eqref{delta-one tops}, define
    \[c_\gamma^* = e_\xi^*+\frac{1}{m_j}\big(b^*-b^*i_{p,n}r_p\big).\]
    
 \end{enumerate}

By \cite[Proposition 5.1]{zisimopoulou:2014}, for every $m\leq n$, $\|i_{m,n}\|\leq 2$. We may therefore, for $n\in\N$, define $i_n:\mathcal{W}_n\to\mathcal{W}$ such that, for $z = \big((x_1,y_1),\ldots,(x_n,y_n)\big)\in\mathcal{W}_n$,
\[i_n(z) = \big((x_1,y_1),\ldots,(x_n,y_n),(0,z_{n+1}),(0,z_{n+2}),\ldots\big),\]
where, for $k\geq n$,
\[r_{k+1}i_n(z) = i_{k,k+1}r_{k}(z) = i_{k,k+1}\big((x_1,y_1),\ldots,(x_n,y_n),(0,z_{n+1}),\ldots,(0,z_{k})\big).\]
Given this, it is not hard to see that the assumptions of \Cref{abstract BD sum} are satisfied, and thus, letting, for $n\in\N$, $\mathcal{Z}_n = i_n(X_n\oplus\ell_\infty(\Delta_n))_\infty$, the space
\[\mathcal{Z}\big((m_j,n_j)_{j=1}^\infty,(X_n)_{n=1}^\infty\big) =  \overline{\langle\cup_{n=1}^\infty\mathcal{Z}_n\rangle}\]
is a 2-Bourgain-Delbaen-$\mathscr{L}_\infty$ sum of $(X_n)_{n=1}^\infty$. We refer to this space as the Argyros-Haydon sum of $(X_n)_{n=1}^\infty$ with parameters $(m_j,n_j)_{j=1}^\infty$.

The role of the set $\Gamma^0$ is the following.

\begin{remark}
For every $x\in\mathcal{Z}$ and $\e>0$ there exists $\gamma\in\Gamma$ such that $|e_\gamma^*(x)| \geq \|x\|-\e$. This was, essentially, shown in \cite[Lemma 5.7]{zisimopoulou:2014}, but we repeat the brief argument. Let $z = (x_k,y_k)_{k=1}^\infty\in\mathcal{Z}$ and note $\|z\| = \max\{\sup_k\|x_k\|,\sup_k\|y_k\|\}$. If this maximum is obtained by the second term, then the conclusion is obvious. Otherwise assume that, for some $k_0\in\N$, $\|z\| <\|x_{k_0}\| + \e/2$. Pick $i_0\in\N$ such that $|e_{i_0}^*(x_{k_0})| > \|x_{k_0}\| - \e/2$. Let $n=\max\{k_0, i_0\}$ and $f = ((f_k(i))_{i=1}^n)_{k=1}^n$ such that $f_{k_0}(i_0) = 1$, and all other entries are zero. Then, for $\gamma = (n+1,f)\in\Delta_{n+1}^0$, $|e_\gamma^*(z)| = |e_{i_0}^*(x_{k_0})| > \|z\|-\e$.
\end{remark}

The following is proved in \cite[Corollary 5.15]{zisimopoulou:2014}.

\begin{theorem}
\label{shrinking and duality}
The Schauder decomposition $(\mathcal{Z}_n)_{n=1}^\infty$ of the space $\mathcal{Z} = \mathcal{Z}\big((m_j,n_j)_{j=1}^\infty,(X_n)_{n=1}^\infty\big)$ is shrinking. In particular, $\mathcal{Z}^*$ is $2$-isomorphic to $\big(\oplus_{n=1}^\infty(X_n^*\oplus\ell_1(\Delta_n))_1\big)_1$.
\end{theorem}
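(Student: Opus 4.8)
The plan is to establish the shrinking property via the Argyros--Haydon rapidly-increasing-sequence machinery (from \cite{argyros:haydon:2011}, adapted to AH sums in \cite{zisimopoulou:2014}), and then to deduce the duality statement from shrinking together with the Bourgain--Delbaen structure of $\mathcal{Z}$.

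For shrinking, I would use the standard fact that a Schauder decomposition $(\mathcal{Z}_n)_{n=1}^\infty$ is shrinking if and only if $f(u_k)\to 0$ for every $f\in\mathcal{Z}^*$ and every bounded block sequence $(u_k)_{k=1}^\infty$ with respect to $(\mathcal{Z}_n)_{n=1}^\infty$ (that is, $u_k=P_{(p_k,q_k]}u_k$ for integers $p_1\le q_1<p_2\le q_2<\cdots$). Suppose this fails. Then, passing to a subsequence and replacing $f$ by a suitable unimodular multiple of itself, we obtain a bounded block sequence $(u_k)_{k=1}^\infty$, a functional $f\in\mathcal{Z}^*$, and $\varepsilon>0$ with $\mathrm{Re}\,f(u_k)\ge\varepsilon$ for all $k$; consequently every convex combination $v$ of finitely many of the $u_k$ satisfies $\|v\|\ge\mathrm{Re}\,f(v)/\|f\|\ge\varepsilon/\|f\|>0$. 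On the other hand, the Argyros--Haydon machinery applied to $\mathcal{Z}$ provides a constant $C$, depending only on $\sup_k\|u_k\|$, such that for every $j$ there is a convex combination $v_j$ of finitely many of the $u_k$, over consecutive indices, with $\|v_j\|\le C/m_j$. Taking $j$ with $m_j>C\|f\|/\varepsilon$ yields a contradiction, so $(\mathcal{Z}_n)_{n=1}^\infty$ is shrinking.

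The step I expect to be the main obstacle is the existence of these small-norm convex combinations, which is the heart of \cite{zisimopoulou:2014}: it is obtained by combining the $\ell_1^{n_j}$-average lemma, the basic inequality, and special convex combinations (via the $\ell_1^{n_j}$-average lemma one passes to a block sequence of convex combinations of $(u_k)$ that is a rapidly increasing sequence of $\ell_1$-averages, and a special convex combination of $n_j$ of these has norm at most $C/m_j$ by the basic inequality). In the AH-sum setting one must, alongside the weighted $\Delta^1$-functionals $c_\gamma^*$, also control the $\Delta^0$-functionals, which encode $\ell_1$-normalized combinations of the coordinate functionals of $\ell_\infty$ acting on the $X_n$-summands (the sets $K_n$): the point is that they act with norm at most one and are absorbed into the auxiliary Schlumprecht-type norm governing the estimate, so that they do not spoil the bound $C/m_j$. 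This is where the lacunarity hypotheses of \Cref{lacunary assumption} are essential, and it explains why shrinking is not a formal consequence of the abstract Bourgain--Delbaen-$\mathscr{L}_\infty$-sum axioms.

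For the duality statement I would argue directly from the construction. Recall that $\mathcal{Z}\subset\mathcal{W}$ isometrically, that $\|i_n\|\le 2$ and $r_ni_n=\mathrm{id}_{\mathcal{W}_n}$ with $\|r_n\|\le 1$, and that $i_n(\mathcal{W}_n)=P_n(\mathcal{Z})$ (\Cref{standard BD decomposition}). Thus $i_n\colon\mathcal{W}_n\to\mathcal{Z}$ is a $2$-isomorphic embedding with bounded left inverse $r_n|_{\mathcal{Z}}$, so the adjoint $r_n^*\colon\mathcal{W}_n^*\to\mathcal{Z}^*$ satisfies $\|r_n^*\|\le 1$ and $i_n^*r_n^*=\mathrm{id}_{\mathcal{W}_n^*}$ with $\|i_n^*\|\le 2$, and has range $r_n^*(\mathcal{W}_n^*)=P_n^*(\mathcal{Z}^*)$. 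Identifying $\mathcal{W}_n^*=\big(\oplus_{k=1}^n(X_k^*\oplus\ell_1(\Delta_k))_1\big)_1$ with the subspace of $\mathcal{V}:=\big(\oplus_{k=1}^\infty(X_k^*\oplus\ell_1(\Delta_k))_1\big)_1$ of sequences supported on the first $n$ coordinates, and using $r_nr_{n+1}=r_n$, these embeddings are compatible: $r_{n+1}^*|_{\mathcal{W}_n^*}=r_n^*$. By the shrinking property, $\overline{\bigcup_n P_n^*(\mathcal{Z}^*)}=\mathcal{Z}^*$, while $\bigcup_n\mathcal{W}_n^*$ is dense in the $\ell_1$-sum $\mathcal{V}$. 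Hence the common extension $\Theta\colon\mathcal{V}\to\mathcal{Z}^*$ of the maps $r_n^*$ is a bounded operator with $\|\Theta\|\le 1$ and dense range, and $\|\Theta g\|\ge\tfrac12\|g\|$ for every $g\in\mathcal{V}$ (on each $\mathcal{W}_n^*$ this follows from $\|g\|=\|i_n^*\Theta g\|\le 2\|\Theta g\|$, and it extends to $\mathcal{V}$ by density). Therefore $\Theta$ is a surjective $2$-isomorphism, which is precisely the assertion that $\mathcal{Z}^*$ is $2$-isomorphic to $\big(\oplus_{n=1}^\infty(X_n^*\oplus\ell_1(\Delta_n))_1\big)_1$.
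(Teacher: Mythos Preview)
The paper does not supply its own proof of this theorem; it simply records that the statement is \cite[Corollary 5.15]{zisimopoulou:2014} and moves on. So there is no in-paper argument to compare against.

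Your sketch is sound and is precisely the expected route. The shrinking part is the standard Argyros--Haydon argument (block sequence not weakly null $\Rightarrow$ convex combinations bounded below, contradicted by passing to $\ell_1^{n_j}$-averages, forming a RIS, and applying the basic inequality to a special convex combination), and you correctly flag that in the AH-sum setting the $\Gamma^0$-functionals coming from the sets $K_n$ must be absorbed into the auxiliary estimate; this is exactly what \cite{zisimopoulou:2014} carries out. The duality part is also correct: the key identities $r_ni_n=\mathrm{id}_{\mathcal{W}_n}$, $\|i_n\|\le 2$, and $i_n(\mathcal{W}_n)=P_n(\mathcal{Z})$ give compatible $2$-embeddings $r_n^*\colon\mathcal{W}_n^*\hookrightarrow\mathcal{Z}^*$ with range $P_n^*(\mathcal{Z}^*)$, and shrinking lets you glue them into a surjective $2$-isomorphism from the $\ell_1$-sum. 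Your proof sketch thus matches what one finds in the cited reference.
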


\subsection{Evaluation analysis of $e_\gamma^*$}
The evaluation analysis of a coordinate functional $e_\gamma^*$ is a central concept invented in \cite{argyros:haydon:2011}, and it has been used in all Argyros-Haydon constructions.

For $\gamma\in\Delta_1$ let $c_\gamma^* = 0$, and for $\gamma\in\Gamma$, let $d_\gamma^* = e_\gamma^* - c^*_\gamma$. For $\gamma\in\Delta_1$, $e_\gamma^* = d_\gamma^*$ while, for $\gamma\in\Gamma^0$ with $\mathrm{top}(\gamma) = f\in K_n$, it directly follows $e_\gamma^* = d_{\gamma}^*+f$.

The following can be proved easily by induction on the age of a $\gamma$ (see, e.g., \cite[Proposition 5.3]{zisimopoulou:2014} or \cite[Proposition 4.5]{argyros:haydon:2011}).

\begin{proposition}[Evaluation analysis]
For $\gamma\in\Gamma^1$ with $\mathrm{rank}(\gamma)>1$,
\[e_\gamma^* = \sum_{r=1}^ad_{\xi_r}^* + \frac{1}{m_j}\sum_{r=1}^ab_r^*\circ P_{(p_r,q_r]},\]
where $a=\mathrm{age}(\gamma)$, $m_j^{-1}=\mathrm{weigh}(\gamma)$, $\xi_a = \gamma$, for $1\leq r<a$, $\xi_r = \mathrm{base}(\xi_{r+1})$, and, for $1\leq r\leq a$, $b_r^*=\mathrm{top}(\xi_r)$, $p_r = \mathrm{cut}(\xi_r)$, $q_r = \mathrm{rank}(\xi_r)-1$. Furthermore, for $1\leq r\leq a$, $p_r<q_r$ and, for $1\leq r<a$, $q_r+1<p_{r+1}$. 
\end{proposition}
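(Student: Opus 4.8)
The plan is to induct on $a=\mathrm{age}(\gamma)$, reading the formula off the three cases (i)--(iii) that define $c_\gamma^*$. The one preparatory observation I would record is a translation between the auxiliary functionals in those cases and the Schauder projections of $\mathcal{Z}$: for $b^*\in B_n$ (viewed as a functional via \eqref{delta-one tops}) and $p<n$, the functional $b^*-b^*i_{p,n}r_p$ agrees on $\mathcal{Z}$ with $b^*\circ P_{(p,n]}$. Indeed, $b^*$ only reads the first $n$ coordinates, so $b^*|_{\mathcal{Z}}=(b^*P_n)|_{\mathcal{Z}}$ since $P_nz=i_nr_nz$ shares its first $n$ coordinates with $z$; and since $i_p=i_ni_{p,n}$ (which follows from \ref{BD assumption c} of \Cref{abstract BD sum}), the first $n$ coordinates of $P_pz=i_pr_pz$ are exactly $i_{p,n}r_pz$, so $(b^*i_{p,n}r_p)(z)=b^*(P_pz)$; subtracting yields the claim.

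For the base case $a=1$ the node $\gamma\in\Delta^1_{n+1}$ (so $\mathrm{rank}(\gamma)=n+1$) has undefined base, hence falls under \ref{even no base} or \ref{odd no base} and $c_\gamma^*=\frac{1}{m_j}\big(b^*-b^*i_{p,n}r_p\big)$ with $b^*=\mathrm{top}(\gamma)$, $p=\mathrm{cut}(\gamma)$, $m_j^{-1}=\mathrm{weight}(\gamma)$. Since $d_\gamma^*=e_\gamma^*-c_\gamma^*$, the translation gives $e_\gamma^*=d_\gamma^*+\frac{1}{m_j}b^*\circ P_{(p,n]}$ on $\mathcal{Z}$, i.e.\ the claimed identity with $\xi_1=\gamma$, $b_1^*=b^*$, $p_1=p$, $q_1=\mathrm{rank}(\gamma)-1$. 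In the inductive step $\gamma$ has $\mathrm{base}(\gamma)=\xi$, so it falls under \ref{even with base} or \ref{odd with base} and $c_\gamma^*=e_\xi^*+\frac{1}{m_j}\big(b^*-b^*i_{p,n}r_p\big)$; here the construction forces $\mathrm{weight}(\xi)=\mathrm{weight}(\gamma)=m_j^{-1}$ and $\mathrm{age}(\xi)=a-1$. Applying the inductive hypothesis to $e_\xi^*$ expresses it with the same leading factor $1/m_j$; combining this with $d_\gamma^*=e_\gamma^*-c_\gamma^*$, the translation, and the labels $\xi_a=\gamma$, $b_a^*=\mathrm{top}(\gamma)$, $p_a=\mathrm{cut}(\gamma)$, $q_a=\mathrm{rank}(\gamma)-1$ produces the full expansion.

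For the ``furthermore'' part, $p_r<q_r$ is immediate since the definition of $\Delta^{1,\mathrm{aux}}$ at rank $\mathrm{rank}(\xi_r)=n+1$ forces $\mathrm{cut}(\xi_r)\in\{1,\dots,n-1\}$, i.e.\ $p_r\le\mathrm{rank}(\xi_r)-2<q_r$; and $q_r+1<p_{r+1}$ follows from the constraint relating a node's cut to the rank of its base in the construction of $\Delta^1_{n+1}$, which gives $\mathrm{rank}(\xi_r)=\mathrm{rank}(\mathrm{base}(\xi_{r+1}))<\mathrm{cut}(\xi_{r+1})=p_{r+1}$. I do not anticipate a genuine obstacle: the only mildly delicate point is the first-paragraph translation, i.e.\ keeping straight the interplay of the extension maps $i_{m,n}$, the global maps $i_n$, the restrictions $r_n$, and the projections $P_n$, together with the bookkeeping that all the $\xi_r$ carry a common weight so that a single factor $1/m_j$ can be pulled out of the entire sum.
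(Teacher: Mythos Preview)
Your proposal is correct and follows precisely the approach the paper indicates (it does not give a proof, only the remark that the result ``can be proved easily by induction on the age of a $\gamma$'' with references to \cite{zisimopoulou:2014} and \cite{argyros:haydon:2011}). Your translation $b^*-b^*i_{p,n}r_p = b^*\circ P_{(p,n]}$ on $\mathcal{Z}$, the base case, and the inductive step are all handled correctly, including the observation that $\mathrm{weight}(\xi)=\mathrm{weight}(\gamma)$ so a single $1/m_j$ factors out.

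One small caveat on the ``furthermore'' clause: you correctly attribute $q_r+1<p_{r+1}$ to a constraint $\mathrm{rank}(\mathrm{base}(\gamma))<\mathrm{cut}(\gamma)$, which is indeed the standard Argyros--Haydon bookkeeping and is present in the cited sources; however, as written, the present paper's definition of $\Delta_{n+1}^{1,\mathrm{aux}}$ does not seem to state this constraint explicitly. This is almost certainly an inadvertent omission in the paper rather than a gap in your argument, since the proposition asserts the inequality and the construction is declared to follow \cite{zisimopoulou:2014}.
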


It is crucial to have an abundance of available coordinate functionals to achieve lower bounds for linear combinations of certain vectors. This is the purpose of the following, and it was proved in \cite[Proposition 4.7]{argyros:haydon:2011}.

\begin{proposition}
\label{coordinate builder}
Let $j\in\N$, $1\leq a\leq n_{2j}$, and $(p_r,q_r)_{r=1}^a$ be pairs of positive integers such that $2j\leq p_r<q_r$, $1\leq r\leq a$, and $p_{r-1}+1<q_r$, $2\leq r\leq a$. Let also $b_r^*\in B_{q_r}$, $1\leq r\leq a$. Then, there exist $\xi_r\in\Delta^1_{q_r+1}$, $1\leq r\leq a$,  and $\gamma\in\Gamma^1_{q_a+1}$ that has evaluation analysis
\[e_\gamma^* = \sum_{r=1}^ad_{\xi_r}^* + \frac{1}{m_{2j}}\sum_{r=1}^ab_r^*\circ P_{(p_r,q_r]}.\]
\end{proposition}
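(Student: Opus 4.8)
The plan is to induct on $a$, realizing $\gamma$ as the last term $\xi_a$ of a chain $\xi_1,\dots,\xi_a$ in which each $\xi_r$ is an even-weight element of $\Delta^1_{q_r+1}$ with $\mathrm{weight}(\xi_r)=m_{2j}^{-1}$, $\mathrm{cut}(\xi_r)=p_r$, $\mathrm{top}(\xi_r)=b_r^*$, and $\mathrm{base}(\xi_r)=\xi_{r-1}$ (with $\mathrm{base}(\xi_1)$ undefined), so that $\mathrm{age}(\xi_r)=r$. The argument mirrors that of the corresponding statement in \cite{argyros:haydon:2011}. For $r=1$ I would simply check that the tuple $(q_1+1,\emptyset,m_{2j}^{-1},p_1,b_1^*)$ is an admissible member of $\Delta^{1,\mathrm{aux}}_{q_1+1}$: the weight $m_{2j}^{-1}$ lies in $\{m_1^{-1},\dots,m_{2q_1}^{-1}\}$ since $2j\le p_1<q_1$, the cut $p_1$ lies in $\{1,\dots,q_1-1\}$ for the same reason, and $b_1^*\in B_{q_1}$ by hypothesis; since its base is undefined it then satisfies the membership condition for $\Delta^{1,\mathrm{even}}_{q_1+1}$. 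Reading off $c^*_{\xi_1}=\tfrac1{m_{2j}}\bigl(b_1^*-b_1^*i_{p_1,q_1}r_{p_1}\bigr)$ from the first formula for $c_\gamma^*$ (the one with undefined base) and using $d^*_{\xi_1}=e^*_{\xi_1}-c^*_{\xi_1}$ yields the $a=1$ case once the parenthesis is rewritten as in the third paragraph below.

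For the inductive step, suppose $\xi_{r-1}\in\Gamma^1_{q_{r-1}+1}$ has already been produced, with weight $m_{2j}^{-1}$, age $r-1$, and the asserted evaluation analysis for its first $r-1$ summands. The separation conditions on $(p_s,q_s)_{s=1}^a$ place $\mathrm{rank}(\xi_{r-1})=q_{r-1}+1$ at or below $q_r$, so $\xi_{r-1}\in\Gamma^1_{q_r}$, and $q_{r-1}+1\ge 2j+2\ge 4>1$ keeps it outside $\Gamma^1_1$; hence $(q_r+1,\xi_{r-1},m_{2j}^{-1},p_r,b_r^*)$ is an admissible tuple in $\Delta^{1,\mathrm{aux}}_{q_r+1}$, and it satisfies the even-element condition for a node with base because the base has the same weight $m_{2j}^{-1}$ and $\mathrm{age}(\xi_r)=r\le a\le n_{2j}$. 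So $\xi_r\in\Delta^{1,\mathrm{even}}_{q_r+1}$; its functional is $c^*_{\xi_r}=e^*_{\xi_{r-1}}+\tfrac1{m_{2j}}\bigl(b_r^*-b_r^*i_{p_r,q_r}r_{p_r}\bigr)$ by the second formula for $c_\gamma^*$ (the one with $\mathrm{base}(\gamma)\ne\emptyset$), and substituting the inductive expression for $e^*_{\xi_{r-1}}$ into $e^*_{\xi_r}=d^*_{\xi_r}+c^*_{\xi_r}$ telescopes to exactly the claimed analysis. Setting $\gamma=\xi_a\in\Delta^1_{q_a+1}\subseteq\Gamma^1_{q_a+1}$ closes the induction.

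The one identity used at every step is that $b^*-b^*i_{p,n}r_p=b^*\circ P_{(p,n]}$ on $\mathcal Z$, for $b^*\in B_n$ and $p<n$: an element of $B_n$ reads only the first $n$ coordinates of a vector, so $b^*=b^*\circ P_n$ on $\mathcal Z$, while on $\mathcal Z$ one has $i_{p,n}r_p=r_n i_p r_p=r_n P_p$, whence $b^*i_{p,n}r_p=b^*\circ P_p$; subtracting gives $b^*\circ(P_n-P_p)=b^*\circ P_{(p,n]}$. This is precisely what makes the $c_\gamma^*$ prescribed by the construction reproduce the cut-and-restrict summands $\tfrac1{m_{2j}}b_r^*\circ P_{(p_r,q_r]}$ in the evaluation analysis.

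I expect the main obstacle to be purely combinatorial bookkeeping: extracting from $2j\le p_r<q_r$ and the separation inequalities that $\xi_{r-1}$ is already available when $\xi_r$ is defined (the rank comparison $q_{r-1}+1\le q_r$), that $m_{2j}^{-1}$ stays in the permitted weight set $\{m_1^{-1},\dots,m_{2q_r}^{-1}\}$, that $p_r$ and $b_r^*$ fit the prescribed cut and top ranges, and that the age never exceeds the bound $n_{2j}$ licensing the final extension. None of these is deep, but each must be matched precisely against the definition of $\Delta^{1,\mathrm{even}}_{q_r+1}$, and the construction is engineered so that the stated hypotheses are exactly what is needed.
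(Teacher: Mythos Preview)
Your argument is correct and is precisely the standard inductive construction from \cite[Proposition~4.7]{argyros:haydon:2011}, which is all the paper invokes here (it gives no separate proof). Your verification of the admissibility conditions and the identity $b^*-b^*i_{p,n}r_p=b^*\circ P_{(p,n]}$ on $\mathcal{Z}$ are exactly what is needed; the only friction is with two evident typos in the paper's statement (the separation hypothesis should read $q_{r-1}+1<p_r$ rather than $p_{r-1}+1<q_r$, and the age constraint in the definition of $\Delta_{n+1}^{1,\mathrm{even}}$ should be $\le n_{2j}$), both of which you have implicitly corrected in line with the cited source and with how the proposition is actually applied later in the paper.
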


\section{Operators on Argyros-Haydon sums of Banach spaces}

Here we recall one of the main results from \cite{zisimopoulou:2014}, namely that under appropriate assumptions a space $\mathcal{Z} = \mathcal{Z}\big((m_j,n_j)_{j=1}^\infty,(X_n)_{n=1}^\infty\big)$ satisfies the scalar-plus-horizontally compact property. We then study conditions under which an operator $T:\mathcal{Z}^{(1)}\to\mathcal{Z}^{(2)}$, where $\mathcal{Z}^{(1)}$, $\mathcal{Z}^{(2)}$ are two different Argyros-Haydon sums, is ``horizontally small''.

\label{AH sums operators section}

\begin{definition}
\label{HC definition}
Let $X$ be a Banach space with a Schauder decomposition $(X_n)_{n=1}^\infty$ and associated projections $(P_n)_{n=1}^\infty$.
\begin{enumerate}[label=(\alph*),leftmargin=19pt]
    
    \item A sequence $(x_k)_{k=1}^\infty$ is called horizontally block if there exist successive intervals $I_k$, $k\in\N$, of $\N$ such that, for all $k\in\N$, $x_k = P_{I_k}x_k$.

    \item For a Banach space $Y$, a bounded linear operator $T:X\to Y$ is called {\em horizontally compact} if any of the following equivalent conditions holds:
    \begin{enumerate}[label=(\greek*)]

        \item Whenever $(x_k)_{k=1}^\infty$ is a bounded horizontally block sequence in $X$, $\lim_k\|Tx_k\| = 0$.

        \item $T = \lim_nTP_n$, in operator norm.

    \end{enumerate}

\end{enumerate}

\end{definition}

The following, although not stated in this more general form, is proved in \cite[Proposition 7.8]{zisimopoulou:2014}. There, the assumption that every $T:\mathcal{Z}\to X_n$ is horizontally compact is realized by demanding that each $X_n$ either has the Schur property, or $X_n^*$ contains no isomorphic copy of $\ell_1$ (see \cite[Proposition 3.6]{zisimopoulou:2014}). We note that in \cite[page 21]{zisimopoulou:2014}, the author made additional assumptions about the norming functionals $(f_i^n)_{i=1}^\infty$ associated to each space $X_n$, $n\in\N$ (see the introduction of Section \ref{AH sums section} and \Cref{skip norming functionals}). These additional assumptions are not a requirement in the proof and they were only used to simplify a perturbation argument as in \cite{argyros:haydon:2011}.

\begin{theorem}[\cite{zisimopoulou:2014}]
\label{scalar-plus-horizontally compact zisimopoulou}
    Let $(m_j,n_j)_{j=1}^\infty$ satisfy \Cref{lacunary assumption}, $(X_n)_{n=1}^\infty$ be a sequence of closed subspaces of $\ell_\infty$, and denote $\mathcal{Z} = \mathcal{Z}\big((m_j,n_j)_{j=1}^\infty,(X_n)_{n=1}^\infty\big)$. Assume that, for every $n\in\N$, every bounded linear operator $T:\mathcal{Z}\to X_n$ is horizontally compact. Then, for every bounded linear operator $T:\mathcal{Z}\to\mathcal{Z}$, there exists $\lambda\in\C$ such that $T-\lambda I_\mathcal{Z}$ is horizontally compact.
\end{theorem}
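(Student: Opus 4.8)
The plan is to run the Argyros--Haydon operator argument, carried out for a Bourgain--Delbaen sum in \cite{zisimopoulou:2014}, on $\mathcal{Z}=\mathcal{Z}\big((m_j,n_j)_{j=1}^\infty,(X_n)_{n=1}^\infty\big)$, with ``finite rank'' systematically replaced by ``horizontally compact''. First I would assemble the standard toolkit attached to $\mathcal{Z}$: the shrinking Schauder decomposition $(\mathcal{Z}_n)_{n=1}^\infty$ from \Cref{shrinking and duality} with its projections $P_n$ and interval projections $P_I$; the evaluation analysis of the coordinate functionals $e_\gamma^*$ recalled above; the fact that every $x\in\mathcal{Z}$ is normed to within $\e$ by some $e_\gamma^*$, $\gamma\in\Gamma$; and \Cref{coordinate builder}, which supplies an abundance of special functionals of even weight $m_{2j}^{-1}$ and, through the $\sigma$-coding of the odd part of the construction, self-referential functionals of odd weight $m_{2j-1}^{-1}$ attached to \emph{dependent sequences}. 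On top of this one introduces \emph{rapidly increasing sequences} (RIS) --- bounded horizontally block sequences $x_k=P_{I_k}x_k$ with successive intervals $I_k$, rapidly growing ranks relative to a scalar sequence $j_k\uparrow\infty$, and $|e_\gamma^*(x_k)|$ small whenever $\weight(\gamma)=m_i^{-1}$ with $i<j_k$ --- and records the Basic Inequality, which bounds $\|\sum_k a_kx_k\|$, up to arbitrarily small error, by the norm of $\sum_k\pm a_ke_k$ in an auxiliary mixed Tsirelson space with parameters $(m_j^{-1},n_j)_{j=1}^\infty$. Its standard corollaries are that a normalized average of a length-$n_{2j}$ RIS has norm of order $1/m_{2j}$, and that every seminormalized horizontally block sequence contains, for each $j$, such an $\ell_1^{n_{2j}}$-average. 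None of this part uses any hypothesis on the $X_n$, so I would quote it from \cite{zisimopoulou:2014}.

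Next comes the reduction and the one place the hypothesis is consumed. By \Cref{HC definition}, the conclusion is equivalent to: $\|(T-\la I_{\mathcal{Z}})x_k\|\to 0$ for every bounded horizontally block sequence $(x_k)$. Since $(\mathcal{Z}_n)_{n=1}^\infty$ is shrinking, every such sequence is weakly null; hence for each fixed $m$ the finitely many coordinates $e_\gamma^*(Tx_k)$, $\gamma\in\Delta_l$, $l\le m$, tend to $0$, while the hypothesis that every bounded operator $\mathcal{Z}\to X_l$ is horizontally compact upgrades the automatic weak nullity of $\pi_{X_l}(Tx_k)$ to norm nullity. Together these give $\|P_mTx_k\|\to 0$ for every fixed $m$ --- exactly the substitute for ``$P_m$ has finite rank'' in \cite{argyros:haydon:2011}. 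Using this together with the $\mathscr{L}_\infty$-structure (a gliding-hump perturbation), I may assume, after passing to subsequences and to $\ell_1$-averages, that $(x_k)$ is a RIS and that each $Tx_k$ is approximated in norm by a horizontally block vector whose support escapes to infinity with $k$.

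The crux is the diagonal estimate: for every RIS $(x_k)$ in $\mathcal{Z}$ and every bounded $T:\mathcal{Z}\to\mathcal{Z}$,
\[\lim_k\operatorname{dist}\big(Tx_k,\C x_k\big)=0.\]
I would prove this by contradiction. If, after passing to a subsequence, $\|Tx_k-\la x_k\|\ge\de$ for all $\la\in\C$, then subtracting the best scalar multiple and renormalizing yields a seminormalized sequence $(w_k)$, close to the $Tx_k$'s, which by the reduction is essentially a horizontally block RIS ``independent'' of $(x_k)$. One then builds a dependent sequence interleaving even-weight exact pairs built from the $x_k$'s with the $w_k$'s, and, via \Cref{coordinate builder}, a special functional of odd weight $m_{2j-1}^{-1}$ attached to it. Evaluating this functional against $z=\sum_kx_k$ and against the companion vector built from the $T$-side, and comparing with the Basic-Inequality upper bound for $\|z\|$, produces an estimate on $\|T\|$, of the same shape as in \cite{argyros:haydon:2011}, that becomes impossible once $j$ is large, contradicting $\|T\|<\infty$. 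This step I expect to be the main obstacle: the bookkeeping so that the $\sigma$-codings, ages, cuts and ranks all satisfy the hypotheses of \Cref{coordinate builder}, while simultaneously tracking that the $T$-images are only approximately horizontally block and that the pieces $X_n$ carry no finite-dimensionality (which is precisely where the previous paragraph's input is used).

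Finally I would extract the scalar and conclude. From the diagonal estimate, a fixed RIS gives scalars $\la_k$ with $\|Tx_k-\la_kx_k\|\to 0$; since consecutive vectors of a seminormalized RIS are uniformly separated (the decomposition constant is $2$), applying the estimate to $x_k+x_{k+1}$ forces $|\la_k-\la_{k+1}|\to 0$, so $\la_k\to\la$, and interleaving two RIS shows $\la$ does not depend on the sequence. Put $S=T-\la I_{\mathcal{Z}}$, so that $\|Sx_k\|\to 0$ for every RIS. A routine argument --- given a seminormalized bounded horizontally block sequence on which $\|S\cdot\|\ge\de'$, pass to $\ell_1$-averages forming a RIS and derive a contradiction exactly as in the diagonal estimate, using that every vector of $\mathcal{Z}$ is normed by some $e_\gamma^*$ so that such averages can be tested against special functionals --- upgrades this to $\|Sx_k\|\to 0$ for every bounded horizontally block sequence. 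By \Cref{HC definition}, this is exactly the statement that $T-\la I_{\mathcal{Z}}=S$ is horizontally compact.
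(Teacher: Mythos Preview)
The paper does not prove this theorem; it is quoted from \cite[Proposition 7.8]{zisimopoulou:2014} and stated without proof, with only a brief remark that the additional assumptions on norming functionals made there are inessential. Your proposal is a faithful outline of the Argyros--Haydon operator argument as adapted to Bourgain--Delbaen sums in \cite{zisimopoulou:2014}: the reduction via the hypothesis (so that $\|P_mTx_k\|\to 0$ for each fixed $m$ on horizontally block sequences), the diagonal estimate through RIS and dependent sequences built with \Cref{coordinate builder}, and the extraction of the scalar are exactly the steps carried out there, so there is nothing to compare beyond noting that your sketch matches the cited source rather than anything appearing in the present paper.
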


\begin{notation}
Let $(m_j,n_j)_{j=1}^\infty$ satisfy \Cref{lacunary assumption}, $L$ be an infinite subset of $\N$, and $(X_n)_{n=1}^\infty$ be a sequence of closed subspaces of $\ell_\infty$. When we write
\[\mathcal{Z}\big((m_j,n_j)_{j\in L},(X_n)_{n=1}^\infty\big),\]
we will mean $\mathcal{Z}\big((m'_j,n'_j)_{j=1}^\infty,(X_n)_{n=1}^\infty\big)$, where $(m_j',n_j')_{j=1}^\infty$ is relabeling of $(m_j,n_j)_{j\in L}$ given by the unique order preserving bijection $\N\mapsto L$.
\end{notation}

The following statement is a combination of \cite[Corollary 5.12 and Proposition 5.14]{zisimopoulou:2014}. The ``in particular'' part of \cref{RIS characterize horizontal compactness weight-missmatch} follows from \cite[Proposition 2.5]{argyros:haydon:2011}.

\begin{proposition}
\label{RIS characterize horizontal compactness}
Let $(m_j,n_j)_{j=1}^\infty$ satisfy \Cref{lacunary assumption}, $L$ be an infinite subset of $\N$, $(X_n)_{n=1}^\infty$ be a sequence of closed subspaces of $\ell_\infty$, and let $\mathcal{Z} = \mathcal{Z}\big((m_j,n_j)_{j\in L}, (X_n)_{n=1}^\infty\big)$. Then, there exists a class $\mathcal{C}_\mathrm{RIS}$ of bounded horizontally block sequences satisfying the following.
\begin{enumerate}[label=(\roman*),leftmargin=19pt]

    \item For every bounded linear operator $T:\mathcal{Z}\to Y$, where $Y$ is a Banach space, $T$ is horizontally compact if and only if for every $(y_k)_{k=1}^\infty$ in $\mathcal{C}_\mathrm{RIS}$, $\lim_k\|Ty_k\| = 0$.

    \item\label{RIS characterize horizontal compactness weight-missmatch} For every $(y_k)_{k=1}^\infty$ in $\mathcal{C}_\mathrm{RIS}$ there exists $C>0$ such that the map $e_k\mapsto y_k$ extends to a bounded linear operator of norm at most $C$ from the mixed-Tsirelson space $T[(\mathscr{A}_{3n_j},m^{-1}_j)_{j\in L}]$ to $\mathcal{Z}$.
    
    In particular, for every $j\in\N\setminus L$ and positive integers $k_1<\cdots<k_{n_{j}}$,
    \[\big\|\sum_{i=1}^{n_{j}}y_{k_i}\big\| \leq C\frac{n_j}{m_j^2}.\]

\end{enumerate}
\end{proposition}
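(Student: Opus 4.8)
The statement to be proved, Proposition \ref{RIS characterize horizontal compactness}, is essentially a citation-assembly result: parts (i) and the first half of (ii) are quoted verbatim from \cite[Corollary 5.12 and Proposition 5.14]{zisimopoulou:2014}, while the ``in particular'' estimate of part (ii) is a standard consequence of the structure of mixed-Tsirelson norms, exactly as in \cite[Proposition 2.5]{argyros:haydon:2011}. So the only genuine work is deriving the displayed inequality $\|\sum_{i=1}^{n_j}y_{k_i}\| \leq C n_j/m_j^2$ for $j \in \N \setminus L$ from the fact that $e_k \mapsto y_k$ is bounded (with constant $C$) from the mixed-Tsirelson space $T[(\mathscr{A}_{3n_i}, m_i^{-1})_{i \in L}]$ into $\mathcal{Z}$.

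The plan is as follows. First I would recall that in the mixed-Tsirelson space $T[(\mathscr{A}_{3n_i}, m_i^{-1})_{i \in L}]$, the norm of a vector $x$ is computed by the implicit equation $\|x\| = \max\{\|x\|_\infty, \sup \frac{1}{m_i} \sum_{t} \|E_t x\|\}$, where the supremum is over $i \in L$ and admissible families $E_1 < \cdots < E_d$ of at most $3n_i$ successive intervals. Applying the operator bound gives $\|\sum_{i=1}^{n_j} y_{k_i}\| \leq C \|\sum_{i=1}^{n_j} e_{k_i}\|_{T[\cdots]}$, so it suffices to show that the basis-sum vector $u = \sum_{i=1}^{n_j} e_{k_i}$ in the mixed-Tsirelson space has norm at most $n_j/m_j^2$. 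Since $j \notin L$, the functional constant $m_j^{-1}$ is \emph{not} among the weights used in the norm; the relevant estimate is the ``wrong weight'' phenomenon from \cite[Proposition 2.5]{argyros:haydon:2011}. The key step is a case analysis on which weight $m_i^{-1}$ ($i \in L$) is used at the outermost level of any norming functional: if $i < j$ then $m_i \leq \sqrt{m_j}$ by the lacunarity of Assumption \ref{lacunary assumption} and one bounds the count of unit vectors crudely by $\min\{n_j, 3n_i + (\text{splits})\}$; if $i > j$ then $n_i$ is so large (again by Assumption \ref{lacunary assumption}) that a single $\mathscr{A}_{3n_i}$-admissible family can only split the support of $u$ into pieces whose total contribution, after dividing by $m_i$, is governed by the few indices where the functional transitions between the $y_{k_i}$'s. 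An induction on the ``tree'' of a norming functional, tracking at each node both the number of basis vectors in its support and the product of weight factors seen so far, yields the bound $n_j / m_j^2$.

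The hard part — or rather, the part requiring care — is the bookkeeping in the induction that produces the factor $m_j^{-2}$ rather than merely $m_j^{-1}$ or $1$. This is precisely where the quadratic gaps $m_{i+1} \geq m_i^2$ and the super-exponential growth $n_{i+1} \geq (16 n_i)^{\log_2 m_{i+1}}$ in Assumption \ref{lacunary assumption} are used: when the outermost weight index $i$ is less than $j$, the gain comes from $m_i \leq m_{j-1} \leq m_j^{1/2}$ applied twice down a chain, and when $i > j$ the gain comes from $n_i$ dwarfing $n_j$ so that the $\mathscr{A}_{3n_i}$-admissibility constraint is vacuous at the scale of $u$ and the estimate collapses to a lower-complexity sub-functional. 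Since the detailed computation is carried out identically in \cite{argyros:haydon:2011} and \cite{zisimopoulou:2014}, I would present the reduction to the mixed-Tsirelson norm explicitly and then invoke \cite[Proposition 2.5]{argyros:haydon:2011} for the numerical bound, noting only that the relevant hypothesis — $j$ not belonging to the index set defining the space — holds because $j \in \N \setminus L$.
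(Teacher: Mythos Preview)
Your proposal is correct and matches the paper exactly: the paper gives no proof of this proposition, stating only that it is a combination of \cite[Corollary 5.12 and Proposition 5.14]{zisimopoulou:2014} with the ``in particular'' clause following from \cite[Proposition 2.5]{argyros:haydon:2011}, which is precisely the citation-assembly you describe. Your additional sketch of how the $n_j/m_j^2$ bound is extracted from the mixed-Tsirelson norm via the wrong-weight argument is accurate and goes slightly beyond what the paper records.
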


The following is proved with essentially the same argument as in \cite[Lemma 10.3]{argyros:haydon:2011}.

\begin{proposition}
\label{main incomparability result}
Let $(m_j,n_j)_{j=1}^\infty$ satisfy \Cref{lacunary assumption} and let $L_1$, $L_2$ be subsets of $\N$ such that $L_1\setminus L_2$ and $L_2\setminus L_1$ are infinite. Let $(X^{(1)}_n)_{n=1}^\infty$, $(X^{(2)}_n)_{n=1}^\infty$ be sequences of closed subspaces of $\ell_\infty$. For $i=1,2$, let $\mathcal{Z}^{(i)} = \mathcal{Z}\big((m_j,n_j)_{j\in L_i},(X_n^{(i)})_{n=1}^\infty\big)$ and denote its associated sequence of projections $(P_n^{(i)})_{n=1}^\infty$. For a bounded linear operator $T:\mathcal{Z}^{(1)}\to\mathcal{Z}^{(2)}$, the following hold.
\begin{enumerate}[label=(\roman*),leftmargin=19pt]
    
    \item\label{main incomparability result i} If, for every $n\in\N$, $T P_{\{n\}}^{(1)}$ is compact then $\lim_n\|T-P_n^{(2)}T\| = 0$.

    \item\label{main incomparability result ii} If, for every $n\in\N$, $ P_{\{n\}}^{(2)}T$ is compact then $\lim_n\|T-TP_n^{(1)}\| = 0$, i.e., $T$ is horizontally compact.
    
\end{enumerate}

\end{proposition}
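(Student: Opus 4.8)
The strategy is to mimic the argument from \cite[Lemma 10.3]{argyros:haydon:2011}, using the RIS technology encoded in \Cref{RIS characterize horizontal compactness}. The two items are dual to each other, so I would prove item \ref{main incomparability result ii} in detail and indicate the adaptation for \ref{main incomparability result i} (or pass to adjoints, using that the Schauder decompositions are shrinking by \Cref{shrinking and duality}, so that an operator between these spaces is horizontally compact/approximable precisely when a corresponding statement holds for the adjoint). For item \ref{main incomparability result ii}: by part (i) of \Cref{RIS characterize horizontal compactness} applied to $\mathcal Z^{(1)}$, it suffices to show that $\lim_k\|Ty_k\|=0$ for every $(y_k)_{k=1}^\infty$ in the class $\mathcal C_\mathrm{RIS}$ associated to $\mathcal Z^{(1)}$. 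Fix such a sequence and suppose toward a contradiction that $\|Ty_k\|\geq\delta>0$ along a subsequence. Since each $P_{\{n\}}^{(2)}T$ is compact and the $y_k$ are weakly null (RIS sequences are horizontally block, hence weakly null once one discards the finitely many coordinates in any fixed initial segment), a standard gliding-hump/perturbation argument lets me replace $(Ty_k)$ by a horizontally block sequence $(z_k)$ in $\mathcal Z^{(2)}$ with $\|z_k\|\geq\delta/2$, after passing to a further subsequence; here one uses $\sup\|P_n^{(2)}\|\leq 2$ to control the blocking.

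The heart of the matter is the \emph{weight mismatch}. The supports of the $z_k$ in $\mathcal Z^{(2)}$ can be probed by coordinate functionals $e_\gamma^*$ with $\gamma\in\Gamma^{(2)}$, whose weights lie in $\{m_j^{-1}:j\in L_2\}$, while the estimate in \ref{RIS characterize horizontal compactness weight-missmatch} for the domain sequence $(y_k)$ is governed by weights $m_j^{-1}$ with $j\in L_1$. Using the evaluation analysis (the Evaluation analysis Proposition, together with \Cref{coordinate builder} on the $\mathcal Z^{(2)}$ side) I would, for a suitably chosen $j\in L_2\setminus L_1$, build a single functional $e_\gamma^*$ of weight $m_j^{-1}$ that acts coherently on a block $\sum_{i=1}^{n_j} z_{k_i}$ and gives a lower bound of order $\tfrac{1}{m_j}\cdot n_j\cdot\tfrac{\delta}{2}$ for $\|\sum_{i=1}^{n_j} z_{k_i}\|=\|T(\sum_{i=1}^{n_j} y_{k_i})\|$. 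On the other hand, since $j\notin L_1$, the ``in particular'' clause of \ref{RIS characterize horizontal compactness weight-missmatch} gives $\|\sum_{i=1}^{n_j} y_{k_i}\|\leq C n_j/m_j^2$, whence $\|T\|\geq \tfrac{\delta m_j}{2C}$. Letting $j$ range over the infinite set $L_2\setminus L_1$ makes the right-hand side unbounded, contradicting boundedness of $T$. This forces $\lim_k\|Ty_k\|=0$, and \Cref{RIS characterize horizontal compactness}(i) then yields that $T$ is horizontally compact, i.e.\ $\lim_n\|T-TP_n^{(1)}\|=0$.

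For item \ref{main incomparability result i}, the cleanest route is duality: $P_{\{n\}}^{(1)}$ being such that $TP_{\{n\}}^{(1)}$ is compact says $(TP_{\{n\}}^{(1)})^*=(P_{\{n\}}^{(1)})^*T^*$ is compact, and since both decompositions are shrinking, $T^*$ maps $(\mathcal Z^{(2)})^*$ to $(\mathcal Z^{(1)})^*$ which, by \Cref{shrinking and duality}, are (up to $2$-isomorphism) $\ell_1$-sums of the form $(\oplus(X_n^{*}\oplus\ell_1(\Delta_n))_1)_1$; one identifies the adjoint decompositions with the coordinate decompositions of Argyros--Haydon-type duals and applies item \ref{main incomparability result ii} in that dual picture, or alternatively one reruns the RIS argument directly on $\mathcal Z^{(2)}$ with the roles of rows and columns swapped, now choosing $j\in L_1\setminus L_2$. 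The main obstacle I anticipate is precisely the perturbation-plus-coordinate-builder step: one must be careful that discarding initial coordinates (to exploit horizontal blocking and compactness of each $P_{\{n\}}^{(i)}T$) does not destroy the RIS estimate, and that the $\gamma$ produced by \Cref{coordinate builder} genuinely has the required weight $m_j^{-1}$ with $j\in L_2\setminus L_1$ while its ``cut'' intervals separate the supports of the $z_{k_i}$ — this is exactly the bookkeeping done in \cite[Lemma 10.3]{argyros:haydon:2011}, and the same bookkeeping transfers here because $L_1\setminus L_2$ and $L_2\setminus L_1$ are both infinite, so arbitrarily large mismatched weights are available on either side.
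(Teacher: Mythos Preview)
Your argument for item \ref{main incomparability result ii} is correct and is essentially the paper's own proof of the core claim: pass to an RIS sequence $(y_k)$ in $\mathcal{Z}^{(1)}$, use compactness of each $P_{\{n\}}^{(2)}T$ together with weak nullity to block $(Ty_k)$ in $\mathcal{Z}^{(2)}$, then build a $\gamma$ of weight $m_j^{-1}$ with $j\in L_2\setminus L_1$ via \Cref{coordinate builder} and contrast the resulting lower bound with the upper bound from \Cref{RIS characterize horizontal compactness}\ref{RIS characterize horizontal compactness weight-missmatch}. The paper agrees that (ii) is the simpler of the two.

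The gap is in your treatment of item \ref{main incomparability result i}. The duality route does not go through: $(\mathcal{Z}^{(2)})^*$ and $(\mathcal{Z}^{(1)})^*$ are $\ell_1$-sums, not Argyros--Haydon sums, so \Cref{RIS characterize horizontal compactness} and \Cref{coordinate builder} are simply unavailable for $T^*$; there is no RIS class in an $\ell_1$-sum to invoke. Your alternative ``rerun with roles swapped and $j\in L_1\setminus L_2$'' is also off: for (i) the images still live in $\mathcal{Z}^{(2)}$ and the domain estimates still come from $\mathcal{Z}^{(1)}$, so the relevant mismatch is again $j\in L_2\setminus L_1$, not the other way.

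What is genuinely new in (i), and absent from your sketch, is the passage from the desired conclusion $\lim_n\|P_{(n,\infty)}^{(2)}T\|=0$ to a statement about RIS sequences. The paper first proves the claim
\[
\lim_n\sup_{k,m\geq n}\big\|P_{(m,\infty)}^{(2)}Ty_k\big\|=0\quad\text{for every }(y_k)\in\mathcal{C}^{(1)}_\mathrm{RIS},
\]
via exactly your weight-mismatch argument. The nontrivial reduction is showing this claim implies (i): assuming $\|P_{(n,\infty)}^{(2)}T\|\not\to 0$, one uses compactness of each $TP_{\{n\}}^{(1)}$ to find a bounded horizontally block sequence $(z_n)$ in $\mathcal{Z}^{(1)}$ and $k_n\to\infty$ with $\|P_{(k_n,\infty)}^{(2)}Tz_n\|>\varepsilon/2$, and then---this is the missing idea---defines an auxiliary operator $S:\mathcal{Z}^{(1)}\to c_0$, $Sx=(f_n(P_{(k_n,\infty)}^{(2)}Tx))_n$, which is witnessed non-horizontally-compact by $(z_n)$. \Cref{RIS characterize horizontal compactness}(i) then produces an RIS sequence on which $S$ is bounded below, contradicting the claim. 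Without this $c_0$-valued auxiliary operator (or an equivalent device) there is no mechanism to upgrade an arbitrary horizontally block sequence to an RIS one while retaining control of the moving tail $P_{(k_n,\infty)}^{(2)}$.
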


\begin{proof}
We will only prove the first assertion; the second one is similar and somewhat simpler. Denote $\mathcal{C}^{(1)}_\mathrm{RIS}$ the class of bounded horizontally block sequences in $\mathcal{Z}^{(1)}$ given by \Cref{RIS characterize horizontal compactness}.
\begin{claim}
For every $(y_k)_{k=1}^\infty$ in $\mathcal{C}^{(1)}_\mathrm{RIS}$, $\lim_n\sup_{k,m\geq n}\|P^{(2)}_{(m,\infty)}Ty_k\| = 0$.
\end{claim}
We will first use the claim to deduce $\lim_n\|P^{(2)}_{(n,\infty)}T\| = 0$, as desired. Assume this is false. Because, for each $n\in\N$, $TP_{\{n\}}^{(1)}$ is compact it easily follows that, for $n\in\N$, $\lim_k\|P_{(k,\infty)}^{(2)}TP_n^{(1)}\| = 0$. Therefore, for some $\e>0$ and for all $n\in\N$, $\limsup_k\|P^{(2)}_{(k,\infty)}TP^{(1)}_{(n,\infty)}\| >\e$. It is thus easy  find a bounded horizontally block sequence $(z_n)_{n=1}^\infty$ and a strictly increasing sequence of positive integers $(k_n)_{n=1}^\infty$ such that, for all $n\in\N$, $\|P^{(2)}_{(k_n,\infty)}Tz_n\| > \e/2$. For every $n\in\N$, fix $f_n$ in the unit ball of $\mathcal{Z}^{(2)}$ such that $|f_n(P^{(2)}_{(k_n,\infty)}Tz_n)| > \e/2$ and define the bounded linear operator $S:\mathcal{Z}^{(1)}\to c_0$ given by $Sx = (f_n(P^{(2)}_{(k_n,\infty)}Tx))_{n=1}^\infty$. This is well defined because $P_{(k_n,\infty)}T$, $n\in\N$, converges to zero in the strong operator topology. The sequence $(z_n)_{n=1}^\infty$ witnesses the non-horizontal compactness of $S$, therefore, there exists $(y_m)_{m=1}^\infty$ in $\mathcal{C}^{(1)}_\mathrm{RIS}$ such that $\limsup_m\|Sy_m\| >0$. Because the Schauder decomposition of $\mathcal{Z}^{(1)}$ is shrinking, $(y_m)_{m=1}^\infty$ is weakly null. Therefore, we deduce $\lim_n\sup_{\ell,m\geq n}|f_\ell(P_{(k_\ell,\infty)}^{(2)}Ty_m)| > 0$, and in particular,
\[\lim_n\sup_{k,m\geq n}\|P_{(k,\infty)}^{(2)}Ty_m  \| > 0,\]
contradicting the claim.

We proceed to prove the claim now. Fix a sequence $(y_k)_{k=1}^\infty$ in $\mathcal{C}^{(1)}_\mathrm{RIS}$ and let $C$ be the constant given by \Cref{RIS characterize horizontal compactness} \ref{RIS characterize horizontal compactness weight-missmatch}. Assume, towards contradiction, that, for some $\e>0$, $\lim_n\sup_{m,k\geq n}\|P^{(2)}_{(m,\infty)}y_k\| > \e$. We will prove that, for arbitrary $j\in L_2\setminus L_1$, that has even position in the increasing order of $L_2$, $\|T\| \geq \e m_j/(4C)$, which, by \Cref{lacunary assumption} would be absurd. We denote $\Gamma$ the set associated with the construction of $\mathcal{Z}^{(2)}$. After passing to a subsequence of $(y_k)_{k=1}^\infty$ and relabeling, there are $\eta_k\in\Gamma$, $j\leq p_k<q_k<p_{k+1}-1\in\N$, $\lambda_k\in G_{q_k}$, $k\in\N$, such that
\begin{equation*}
\mathfrak{Re}\Big(\lambda_ke_{\eta_k}^*\big(P^{(2)}_{(p_k,q_k]}Ty_k\big)\Big) >\e \text{ and } \big\|P_{(q_k,\infty)}Ty_k\big\| < \frac{\e}{6n^2_{j}}.
\end{equation*}
Because $(y_k)_{k=1}^\infty$ is weakly null, we may further assume that, for every $k\in\N$,
\[\max_{1\leq i<k}\Big|e_{\eta_i}^*\big(P^{(2)}_{(q_i,p_i]}Ty_k\big)\Big| < \frac{\e m_{j}}{6n^2_{j}}\text{ and }\max_{\xi\in\Gamma_{q_{k-1}+1}}|d_\xi^*(Ty_k)| < \frac{\e}{6n^2_{j}}.\]

Because $j$ has even position in $L_2$, by \Cref{coordinate builder} there exists $\gamma\in\Gamma$ with evaluation analysis
\[e_\gamma^* = \frac{1}{m_j}\sum_{k=1}^{n_j}\lambda_ke_{\eta_k}^*\circ P_{(p_k,q_k]} + \sum_{k=1}^{n_j}d_{\xi_k}^*,\]
for some $\xi_k\in\Delta_{q_k+1}^1$, $1\leq k\leq n_j$. We use this $\gamma$ to bound from below the norm of $T$.
\begin{equation}\label{main incomparability result eq1}
\begin{split}
    \|T\|C\frac{n_{j}}{m^2_{j}} &\geq \|T\|\Big\|\sum_{k=1}^{n_{j}}y_k\Big\|  \geq \Big|e_\gamma^*\Big(\sum_{k=1}^{n_{j}}Ty_k\Big)\Big|\\
&\geq \underbrace{\Big|\sum_{k=1}^{n_{j}}e_\gamma^*\Big(P^{(2)}_{(p_k,q_k]}Ty_k\Big)\Big|}_{=:\alpha} - \sum_{k=1}^{n_{j}}\underbrace{\Big|e_\gamma^*\Big(P^{(2)}_{[1,q_{k-1}+1]}Ty_k\Big)\Big|}_{=:\beta_k}\\
&\phantom{geq}- \sum_{k=1}^{n_{j}}\underbrace{\Big|e_\gamma^*\Big(P^{(2)}_{(q_{k-1}+1,p_k]}Ty_k\Big)\Big|}_{=0} - \sum_{k=1}^{n_{j}}\underbrace{\Big|e_\gamma^*\Big(P^{(2)}_{(q_k,\infty)}Ty_k\Big)\Big|}_{\leq \e/(6n_{2j})}.
\end{split}
\end{equation}
We estimate $\alpha$ and the $\beta_k$ as follows:
\begin{equation}
\label{main incomparability result eq2}
\alpha \geq \mathfrak{Re}\Big(\frac{1}{m_{j}}\sum_{k=1}^{n_{j}}\lambda_ke_{\eta_k}^*\big(P^{(2)}_{(p_k,q_k]}Ty_k)\Big) > \e\frac{n_{j}}{m_{j}},
\end{equation}
and, for $1\leq k\leq n_{j}$,
\begin{equation}
\label{main incomparability result eq3}
    \beta_k =\Big|\frac{1}{m_{j}}\sum_{i=1}^{k-1}\lambda_ie_{\eta_i}^*\Big(P_{(q_i,p_i]}Ty_k\Big) + \sum_{i=1}^{k-1}d_{\xi_i}^*(Ty_k)\Big| < \frac{\e}{6n_{j}} + \frac{\e}{6n_{j}}.
\end{equation}
Combining \eqref{main incomparability result eq1}, \eqref{main incomparability result eq2}, and \eqref{main incomparability result eq3}, we deduce $\|T\| \geq \e m_j/(4C)$.
\end{proof}

\section{The space $\mathcal{Z}_{\mathcal{K}(c_0)}$}
\label{ZKC0 section}
In this brief penultimate section we define the space $\mathcal{Z}_{\mathcal{K}(c_0)}$ and prove \Cref{AH sum of AH}. Fix a sequence of pairs $(m_j,n_j)_{j=1}^\infty$ satisfying \Cref{lacunary assumption} and let $L_1$, $L_2$ be disjoint infinite subsets of $\N$. Define
\[\mathfrak{X}_\mathrm{AH} = \mathcal{Z}\big((m_j,n_j)_{j\in L_1},(\{0\})_{n=1}^\infty\big),\]
i.e., the Argyros-Haydon sum of infinitely many copies of the zero subspace with parameters $(m_j,n_j)_{j\in L_1}$. This Banach space is separable, because its standard Schauder decomposition $(\mathcal{Z}_n)_{n=1}^\infty$ consists of finite dimensional spaces. This space is practically the same as the original Argyros-Haydon space from \cite{argyros:haydon:2011}, but for completeness, we justify its main property.

\begin{proposition}
\label{XAH has scalar-plus-compact}
The space $\mathfrak{X}_\mathrm{AH}$ has the scalar-plus-compact property.    
\end{proposition}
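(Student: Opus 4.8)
The plan is to deduce \Cref{XAH has scalar-plus-compact} from the structural result \Cref{scalar-plus-horizontally compact zisimopoulou}, combined with the elementary observation that, for the present degenerate choice of summands, horizontal compactness coincides with compactness. First I would relabel $(m_j,n_j)_{j\in L_1}$ in increasing order and note that the resulting parameter sequence still satisfies \Cref{lacunary assumption}, since those growth conditions are inherited by subsequences. Since every summand of $\mathfrak{X}_\mathrm{AH}$ is the zero space $\{0\}$, the only bounded linear operator $T\colon\mathfrak{X}_\mathrm{AH}\to\{0\}$ is the zero map, which is trivially horizontally compact; hence the hypothesis of \Cref{scalar-plus-horizontally compact zisimopoulou} holds vacuously, and we conclude that for every $T\in\mathcal{L}(\mathfrak{X}_\mathrm{AH})$ there is $\lambda\in\C$ with $T-\lambda I_{\mathfrak{X}_\mathrm{AH}}$ horizontally compact.

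Next I would observe that the standard Schauder decomposition $(\mathcal{Z}_n)_{n=1}^\infty$ of $\mathfrak{X}_\mathrm{AH}$ consists of finite-dimensional spaces, because $\mathcal{Z}_n=i_n(\{0\}\oplus\ell_\infty(\Delta_n))_\infty$ has dimension $|\Delta_n|<\infty$. By \cref{standard BD decomposition ii} the associated projection $P_n$ has range $i_n(\mathcal{W}_n)$, which has dimension $\sum_{k=1}^n|\Delta_k|<\infty$, so each $P_n$ is of finite rank. Consequently, if $A\in\mathcal{L}(\mathfrak{X}_\mathrm{AH})$ is horizontally compact, then $A=\lim_n AP_n$ in operator norm by \Cref{HC definition}, exhibiting $A$ as a norm-limit of finite-rank operators, so $A$ is compact. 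Combining this with the previous step, every bounded linear operator on $\mathfrak{X}_\mathrm{AH}$ is a scalar multiple of the identity plus a compact operator; since $\mathfrak{X}_\mathrm{AH}$ is infinite-dimensional (as $\Gamma$ is infinite), this is precisely the scalar-plus-compact property.

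I do not expect a serious obstacle in this particular proof: all the genuine difficulty — the RIS machinery and the evaluation analysis used to control operators on Bourgain-Delbaen sums — has already been packaged into \Cref{scalar-plus-horizontally compact zisimopoulou}, quoted from \cite{zisimopoulou:2014}. The only two points deserving a line of care are that the degenerate choice $X_n=\{0\}$ is genuinely admissible in that theorem (its hypothesis on operators into the summands being vacuous), and that finite-dimensionality of the blocks upgrades ``horizontally compact'' to ``compact''.
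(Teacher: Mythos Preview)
Your proof is correct and follows essentially the same approach as the paper: apply \Cref{scalar-plus-horizontally compact zisimopoulou} using the trivially satisfied hypothesis (since each $X_n=\{0\}$), then upgrade horizontal compactness to compactness via the finite-dimensionality of the Schauder decomposition. Your version simply spells out in more detail why $A=\lim_n AP_n$ with finite-rank $P_n$ yields compactness, which the paper leaves implicit.
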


\begin{proof}
For each $n\in\N$, every $T:\mathcal{Z}\to X_n = \{0\}$ is, trivially, horizontally compact. Therefore, by \Cref{scalar-plus-horizontally compact zisimopoulou}, every bounded linear operator is a scalar multiple of the identity plus a horizontally compact operator. But, because the standard Schauder decomposition is finite dimensional, horizontally compact operators are in fact compact.
\end{proof}

We fix, for each $n\in\N$, an isometric copy $X_n$ of $\mathfrak{X}_{AH}$ in $\ell_\infty$ and define
\[\mathcal{Z}_{\mathcal{K}(c_0)} = \mathcal{Z}\big((m_j,n_j)_{j\in L_2},(X_n)_{n=1}^\infty\big).\]

\begin{proposition}
\label{ZKC0 has scalar-plus-horizontally approximable}
The space $\mathcal{Z}_{\mathcal{K}(c_0)}$ has the scalar-plus-horizontally approximable property.    
\end{proposition}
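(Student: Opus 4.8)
The plan is to read off the scalar part from \Cref{scalar-plus-horizontally compact zisimopoulou} and then to upgrade the resulting ``horizontally compact'' remainder to a ``horizontally approximable'' one using \Cref{main incomparability result}; the latter is available precisely because $L_1$ and $L_2$ are disjoint, so that $L_1\setminus L_2=L_1$ and $L_2\setminus L_1=L_2$ are both infinite. Throughout write $\mathcal{Z}=\mathcal{Z}_{\mathcal{K}(c_0)}$ with standard projections $(P_n)_{n=1}^\infty$, and record two structural facts. First, $\mathfrak{X}_\mathrm{AH}$ has a finite-dimensional standard Schauder decomposition, so all of its coordinate projections $P_{\{k\}}^{\mathfrak{X}_\mathrm{AH}}$ are finite-rank. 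Second, for each $n$ the space $\mathcal{Z}_n=i_n(X_n\oplus\ell_\infty(\Delta_n))_\infty$ splits, via the isomorphism $i_n$, into a copy of $X_n\cong\mathfrak{X}_\mathrm{AH}$ and the finite-dimensional space $\ell_\infty(\Delta_n)$, so that one can write $P_{\{n\}}=j_n\alpha_n+\beta_n$ for bounded operators $j_n\colon X_n\to\mathcal{Z}$ and $\alpha_n\colon\mathcal{Z}\to X_n$ and a finite-rank operator $\beta_n\colon\mathcal{Z}\to\mathcal{Z}$ (the part factoring through $\ell_\infty(\Delta_n)$).

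First I would show that every bounded linear operator $T\colon\mathcal{Z}\to X_n$ is horizontally compact. Identifying $X_n$ with $\mathfrak{X}_\mathrm{AH}=\mathcal{Z}\big((m_j,n_j)_{j\in L_1},(\{0\})_{n=1}^\infty\big)$ and applying \Cref{main incomparability result}\ref{main incomparability result ii} with $\mathcal{Z}^{(1)}=\mathcal{Z}$ and $\mathcal{Z}^{(2)}=\mathfrak{X}_\mathrm{AH}$: for every $k$ the operator $P_{\{k\}}^{\mathfrak{X}_\mathrm{AH}}T$ is finite-rank, hence compact, so $T$ is horizontally compact. Consequently the hypothesis of \Cref{scalar-plus-horizontally compact zisimopoulou} is met (the relabelled parameters $(m_j,n_j)_{j\in L_2}$ again satisfy \Cref{lacunary assumption}), and every $T\in\mathcal{L}(\mathcal{Z})$ can be written $T=\lambda I_{\mathcal{Z}}+S$ with $\lambda\in\C$ and $S$ horizontally compact, i.e. $\lim_n\|S-SP_n\|=0$.

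It then remains to prove $\lim_n\|S-P_nS\|=0$, which together with the previous line places $S$ in $\mathcal{HA}(\mathcal{Z})$ and yields the scalar-plus-horizontally approximable property. The key is a columnwise estimate: for each fixed $n$, $\lim_N\|P_{(N,\infty)}SP_{\{n\}}\|=0$. Indeed $SP_{\{n\}}=(Sj_n)\alpha_n+S\beta_n$; the operator $S\beta_n$ is finite-rank so $\|P_{(N,\infty)}S\beta_n\|\to 0$, while $Sj_n\colon X_n\cong\mathfrak{X}_\mathrm{AH}\to\mathcal{Z}$ has finite-rank compositions $(Sj_n)P_{\{k\}}^{\mathfrak{X}_\mathrm{AH}}$, so \Cref{main incomparability result}\ref{main incomparability result i}, this time with $\mathcal{Z}^{(1)}=\mathfrak{X}_\mathrm{AH}$ and $\mathcal{Z}^{(2)}=\mathcal{Z}$, gives $\|P_{(N,\infty)}Sj_n\|\to 0$ and hence $\|P_{(N,\infty)}(Sj_n)\alpha_n\|\to 0$. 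With the columnwise estimate in hand, let $\Lambda=\sup_N\|P_{(N,\infty)}\|<\infty$, fix $\e>0$, use horizontal compactness of $S$ to choose $M$ with $\|SP_{(M,\infty)}\|<\e/(2\Lambda)$, and then, since $P_{(N,\infty)}SP_M=\sum_{n=1}^M P_{(N,\infty)}SP_{\{n\}}$, choose $N$ so large that $\sum_{n=1}^M\|P_{(N,\infty)}SP_{\{n\}}\|<\e/2$; then $\|P_{(N,\infty)}S\|\le\|P_{(N,\infty)}SP_M\|+\Lambda\|SP_{(M,\infty)}\|<\e$.

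The only substantive step is the columnwise estimate, and it is exactly where disjointness of $L_1$ and $L_2$ enters: the columns of $S$ are, modulo finite-rank perturbations, operators out of a Bourgain--Delbaen sum built with parameters $(m_j,n_j)_{j\in L_1}$ into one built with the disjoint parameters $(m_j,n_j)_{j\in L_2}$, and \Cref{main incomparability result}\ref{main incomparability result i} forces every such operator to satisfy $\lim_N\|P_{(N,\infty)}(\cdot)\|=0$. Everything else is routine: \Cref{scalar-plus-horizontally compact zisimopoulou} supplies the scalar reduction, the splitting of $\mathcal{Z}_n$ is immediate from \Cref{abstract BD sum}, and the passage from the columnwise estimate to $\lim_N\|P_{(N,\infty)}S\|=0$ is the $\e/2+\e/2$ argument above.
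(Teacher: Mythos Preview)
Your proof is correct and follows essentially the same approach as the paper: first verify the hypothesis of \Cref{scalar-plus-horizontally compact zisimopoulou} via \Cref{main incomparability result}\ref{main incomparability result ii} (using that the Schauder decomposition of $\mathfrak{X}_\mathrm{AH}$ is finite-dimensional), then upgrade horizontal compactness to horizontal approximability by splitting each $\mathcal{Z}_n$ as $\mathfrak{X}_\mathrm{AH}$ plus a finite-dimensional piece and invoking \Cref{main incomparability result}\ref{main incomparability result i} on the $\mathfrak{X}_\mathrm{AH}$-part. The only cosmetic difference is that the paper phrases the second step as a proof by contradiction, whereas you give the direct $\e/2+\e/2$ estimate; the underlying columnwise argument is the same.
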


\begin{proof}
We first argue that $\mathcal{Z}_{\mathcal{K}(c_0)}$ has the scalar-plus-horizontally compact property. By the finite-dimensionality of the standard Schauder decomposition of $\mathfrak{X}_\mathrm{AH}$, for every bounded linear operator $T:\mathcal{Z}_{\mathcal{K}(c_0)}\to \mathfrak{X}_\mathrm{AH}$ the assumption of \Cref{main incomparability result} \ref{main incomparability result ii} is satisfied, and therefore, $T$ is horizontally compact. By \Cref{scalar-plus-horizontally compact zisimopoulou}, every bounded linear operator $T:\mathcal{Z}_{\mathcal{K}(c_0)}\to \mathcal{Z}_{\mathcal{K}(c_0)}$ is a scalar multiple of the identity plus a horizontally compact operator.

We will next show that horizontally compact operators on $\mathcal{Z}_{\mathcal{K}(c_0)}$ are horizontally approximable. Let $T:\mathcal{Z}_{\mathcal{K}(c_0)}\to\mathcal{Z}_{\mathcal{K}(c_0)}$ be horizontally compact; we will show $\lim_n\|P_{(m,\infty)}T\| = 0$. If this is false, because $T = \lim_nTP_n$, there is $n_0\in\N$ such that $\limsup_m\|P_{(m,\infty)}TP_{n_0}\| > 0$. Then, for some $1\leq n\leq n_0$, $\limsup_m\|P_{(m,\infty)}TP_{\{n\}}\|>0$. Because $\mathcal{Z}_n$ is isomorphich to $\mathfrak{X}_\mathrm{AH}\oplus\ell_\infty(\Delta_n)$, and $\Delta_n$ is finite, there exists a bounded linear operator $S:\mathfrak{X}_\mathrm{AH}\to\mathcal{Z}_{\mathcal{K}(c_0)}$ such that $\liminf_m\|P_{(m,\infty)}S\| > 0$. But, by the finite-dimensionality of the Schauder decomposition of $\mathfrak{X}_\mathrm{AH}$, $S$ satisfies the assumption of \Cref{main incomparability result} \ref{main incomparability result ii} and, therefore, $\liminf_m\|P_{(m,\infty)}S\| = 0$.
\end{proof}

\begin{remark}
By \Cref{shrinking and duality}, the dual of $\mathcal{Z}_{\mathcal{K}(c_0)}$ is 4-isomorphic to $\ell_1$. In particular, $\mathcal{Z}_{\mathcal{K}(c_0)}$ has the approximation property, and thus, $\mathcal{K}(\mathcal{Z}_{\mathcal{K}(c_0)})$ is the minimum non-trivial closed ideal of $\mathcal{L}(\mathcal{Z}_{\mathcal{K}(c_0)})$. Because $\mathcal{K}(c_0)$ is simple, it follows that the only non-trivial closed ideals of $\mathcal{L}(\mathcal{Z}_{\mathcal{K}(c_0)})$ are $\mathcal{K}(\mathcal{Z}_{\mathcal{K}(c_0)})$ and $\mathcal{HA}(\mathcal{Z}_{\mathcal{K}(c_0)})$.
\end{remark}

\section{Open problems and directions}
We share some open problems and directions that arise naturally from this work. For additional questions of a similar nature we refer the reader to \cite[Section 8]{motakis:puglisi:tolias:2020}, \cite[Section 9]{motakis:2024}, and \cite[Section 3]{motakis:pelczar:2024}.

A combination and refinement of techniques from \cite{motakis:puglisi:zisimopoulou:2016} and this paper has potential to answer the following.
\begin{problem}
Let $K$ be a countable compact metric space. Is the unitization of $\mathcal{K}(C(K))\equiv C(K;\ell_1(K))$ isomorphic as a Banach algebra to the Calkin algebra of some Banach space?
\end{problem}

 Let $1<p<\infty$ and let $q$ denote its conjugate exponent. In the definition of a Bourgain-Delbaen-$\mathscr{L}_\infty$-sum, one can replace the space $\mathcal{W}$ with the space $\big(\big(\oplus_{k=1}^\infty X_n\big)_p\oplus \big(\oplus_{n=1}^\infty\ell_\infty(\Delta_n)\big)_\infty\big)_\infty$ to achieve a mixed Bourgain-Delbaen-$(p,\infty)$-sum of a sequence of Banach spaces $(X_n)_{n=1}^\infty$ (see \cite{motakis:puglisi:tolias:2020} for a similar mixed sum). The endeavour of studying the properties of such a sum may be useful in studying the following. Let $X$ be a infinite dimensional Banach space with trivial type (such as $c_0$ or $\ell_1$), and let $Y = \ell_p(X)$, considered with its standard Schauder decomposition. Consider the subalgebra of $\mathcal{HA}(Y)$ of all horizontally approximable operators with matrix representation $A = (a_{m,n}I_{X})_{m,n=1}^\infty$. Then,
\[
\begin{split}
\|A\|_{\mathcal{L}(Y)} &= \sup\Big\{\sum_{m=1}^\infty\sum_{n=1}^\infty |x_ny_ma_{m,n}|: \|(x_n)_{n=1}^\infty\|_p\|(y_m)_{m=1}^\infty\|_q\leq 1\Big\}\\
& = \|(|a_{m,n}|)_{m,n=1}^\infty\|_{\mathcal{L}(\ell_p)}.
\end{split}
\]
Put differently, this space is the completion of the space of finitely supported scalar matrices in $\mathcal{K}(\ell_p)$ with the smallest norm dominating $\|\cdot\|_{\mathcal{L}(\ell_p)}$ making the collection $E_{i,j} = (\delta_i(m)\delta_j(n))_{m,n=1}^\infty$, $i,j\in\N$, a 1-unconditional basis. Let us denote this Banach algebra $\mathcal{K}(\ell_p)_\mathrm{u}$.

\begin{problem}
\label{unconditionalization of compacts}
For $1<p<\infty$, is $\mathcal{K}(\ell_p)_\mathrm{u}$ isomorphic as a Banach algebra to the Calkin algebra of some Banach space?
\end{problem}

This can also be asked for the analogously defined space $\mathcal{K}(V)_\mathrm{u}$, for any Banach space $V$ with an unconditional basis. A mixed Bourgain-Delbaen-$(V,\infty)$ is the natural candidate for solving this if $V$ does not contain $\ell_1$. Other types of mixed Bourgain-Delbaen sums based on those in \cite{motakis:puglisi:tolias:2020} may be able to address the case when $\ell_1$ is a subspace of $V$.

\Cref{unconditionalization of compacts} can be seen as a variation of the more challenging goal of providing a representation of an explicit infinite dimensional non-commutative $C^*$-algebra, such as $\mathcal{K}(\ell_2)$, as a  Calkin algebra (see \cite[Section 9]{motakis:2024} for a detailed discussion on related problems posed by N. C. Phillips). Some far simpler known examples are those in \cite[Corollary 8.8]{motakis:2024}, and it was shown in \cite{kania:laustsen:2017} that every finite-dimensional semi-simple complex algebra is a Calkin algebra. 

\begin{problem}
Is the unitization of $\mathcal{K}(\ell_2)$ isomorphic as a Banach algebra to the Calkin algebra of some Banach space?
\end{problem}

More generally, if $X$ is a Banach space with a shrinking Schauder basis, is $\mathcal{K}(X)$ isomorphic to the Calkin algebra of some Banach space?

\bibliographystyle{plain}
\bibliography{bibliography}

\end{document}